\newcommand{\R}     {\mathbb{R}}
\newcommand{\N}     {\mathbb{N}}
\newcommand{\Z}     {\mathbb{Z}}
\newcommand{\T}     {\mathfrak{T}}
\newcommand{\J}     {\mathfrak{J}}
\let\S\relax
\newcommand{\S}     {\mathcal{S}}
\newcommand{\V}     {\mathcal{V}}
\newcommand{\diff}  {\mathop{}\!\mathrm{d}}
\newcommand{\rel}   {\mathrm{rel}}
\renewcommand{\c}   {\mathrm{c}}
\newcommand{\E}     {\mathcal{E}}
\renewcommand{\L}   {\mathfrak{L}}
\newcommand{\hatL}  {\hat\L}
\newcommand{\hatLgamma}{\hatL^{(\gamma)}}
\let\phi\varphi
\DeclareMathOperator{\gap}  {gap}
\DeclareMathOperator{\unif} {Unif}
\DeclareMathOperator{\dom}  {Dom}
\DeclareMathOperator{\grad} {grad}
\DeclarePairedDelimiterX{\norm}[1]{\lVert}{\rVert}{#1}
\theoremstyle{plain}
\newtheorem{theorem}{Theorem}
\newtheorem{lemma}[theorem]{Lemma}
\newtheorem{corollary}[theorem]{Corollary}
\theoremstyle{definition}
\newtheorem{remark}[theorem]{Remark}
\newtheorem{remark*}{Remark}
\crefname{lemma}{lemma}{lemmas}
\crefname{theorem}{theorem}{theorems}
\crefname{assumption}{assumption}{assumptions}
\crefname{assumptionalph}{assumption}{Assumptions}
\title{Convergence rates of self-repellent random walks, their local time and Event Chain Monte Carlo}
\author{Andreas Eberle\thanks{E-Mail: \href{mailto:eberle@uni-bonn.de}{eberle@uni-bonn.de}, ORCID: \href{https://orcid.org/0000-0003-0346-3820}{0000-0003-0346-3820}}\qquad Francis Lörler\thanks{E-Mail: \href{mailto:loerler@uni-bonn.de}{loerler@uni-bonn.de}, ORCID: \href{https://orcid.org/0009-0007-3177-1093}{0009-0007-3177-1093}}\medskip\\Institute for Applied Mathematics, University of Bonn.}
\begin{document}

\maketitle

\begin{abstract}
    We study the rate of convergence to equilibrium of the self-repellent random walk and its local time process on the discrete circle $\Z_n$. While the self-repellent random walk alone is non-Markovian since the jump rates depend on its history via its local time, jointly considering the evolution of the local time profile and the position yields a piecewise deterministic, non-reversible Markov process. We show that this joint process can be interpreted as a second-order lift of a reversible diffusion process, the discrete stochastic heat equation with Gaussian invariant measure. In particular, we obtain a lower bound on the relaxation time of order $\Omega(n^{3/2})$. Using a flow Poincaré inequality, we prove an upper bound for a slightly modified dynamics of order $O(n^2)$, matching recent conjectures in the physics literature. 
    Furthermore, since the self-repellent random walk and its local time process coincide with the Event Chain Monte Carlo algorithm for the harmonic chain, a non-reversible MCMC method, we demonstrate that the relaxation time bound confirms the recent empirical observation that Event Chain Monte Carlo algorithms can outperform traditional MCMC methods such as Hamiltonian Monte Carlo.

    \begin{samepage}
    \par\vspace\baselineskip
    \noindent\textbf{Keywords:} Self-repellent random walk; local time; Event Chain Monte Carlo; lift; convergence rate.\par
    \noindent\textbf{MSC Subject Classification:} 60J25, 60K50, 82C41, 60J22.
    \end{samepage}
\end{abstract}

\section{Introduction}

In this paper, we study the rate of convergence to stationarity for the joint process of a
self-repellent random walk on a discrete circle and its local time process. This is a stronger statement than convergence to stationarity for the self-repellent random walk on its own.
The key new ingredient is the identification of the joint process of local time and the self-repellent random walk as a second order lift of a discrete stochastic heat equation in the sense of \cite{EberleLoerler2024Lifts}. This 
will allow us to apply a recently developed approach to quantitative bounds for convergence to
equilibrium of non-reversible lifts of Markov processes to self-repellent random walks.

\subsection{Self-repellent random walks}

We consider the self-repellent random walk as a continuous-time stochastic process $X(t)$ with state space given by the discrete circle $\Z_n = \Z/n\Z$, and defined on some probability space $(\Omega ,\mathcal A,\mathbb P)$. For $t\in[0,\infty)$ let $L(t)=(L_i(t))_{i\in\mathbb Z_n}$, where
\begin{equation*}
    L_i(t) = \int_0^t1_{\{i\}}(X(s))\diff s
\end{equation*}
is the time the process has spent at position $i$ up to time $t$. The self-repellent random walk performs nearest neighbour jumps
\begin{align*}
    i \to i+1&\quad\text{with rate}\quad q^+(l,i) = (l_i')_-\,,\quad \text{and}\\
    i \to i-1&\quad\text{with rate}\quad q^-(l,i) = (l_{i-1}')_+\,,
\end{align*}
where $l$ is the current value of the local time process $L(t)$ and $$l_i' = l_{i+1}-l_i\, .$$ 
Here and in the following, indices are to be understood periodically in $\Z_n$. 
The process $X(t)$ on its own has memory and is hence not a Markov process, but
the process $(L(t)),X(t))_{t\geq 0}$
is a piecewise-deterministic Markov process with state space $\R^n\times\Z_n$. The jump rates $q^+$ and $q^-$ are chosen such that the walker prefers to jump towards neighbouring states at which the random walk has spent less time. Such self-repellent random walks were first introduced by Amit, Parisi and Peliti \cite{AmitParisiPeliti1983Asymptotic} in discrete time as a model for polymer growth. The continuous-time dynamics was introduced in \cite[Section 1.2]{TothVeto2011Continuous} on the infinite grid $\R^\Z\times\Z$ and termed the continuous-time `true' self-avoiding random walk. 
Note that for any $t\in [0,\infty )$,
\begin{equation}\label{eq:growth}
    \sum_{i=1}^n L_i(t)=t\, .
\end{equation}
In order to obtain a process that converges to stationarity, we project the local time $l$ onto the subspace  $$\S  = \big\{\ell\in\R^n : \sum_{i=1}^n\ell_i = 0\big\}$$ of mean-zero vectors by subtracting the average, i.e.\ we consider the process $\mathcal L (t)$ with state space $\mathcal S$ and components
\begin{equation}\label{eq:di_projected}
    \mathcal L_i(t) = L_i(t)-\frac 1n\sum_{j=1}^nL_j(t) = L_i(t)-\frac{t}{n}\qquad\text{for }i\in\mathbb Z_n\,.
\end{equation}
Note that functions $f\colon\mathcal S\to\mathbb R$ can be extended to $\mathbb R^n$ by 
setting $\hat f(l)=f(l-  l\cdot\mathfrak n \, \mathfrak n )$, where $\mathfrak{n}=n^{-1/2}\mathbf{1}$ with $\mathbf{1}=(1,\dots,1)^\top\in\R^n$ is the unit normal vector to $\mathcal S$. In this sense, we define 
\begin{eqnarray}\label{projectedvectorfields}
  \partial_{\ell_i}f(\ell )\ =\ \partial_{\ell_i}\hat f(\ell )\ = \ \partial_{e_i-n^{-1}\mathbf{1}}f(\ell)\,.  
\end{eqnarray}
The corresponding vector fields $\partial_{\ell_i}$ are the projections of the canonical basis vector fields $e_i\in\mathbb R^n$ to $\S$. Since the jump rates $q^+$ and $q^-$ only depend on the increments of $l$ which coincide with those of $\ell$, the process $(\mathcal L(t),X(t))_{t\geq 0}$
is also a piecewise-deterministic Markov process with state space $\S\times\Z_n$. 
On functions $f\in C^1_b(\S\times\Z_n )$, its generator is
\begin{eqnarray*}
     \hatL f (\ell,i)& =& \T f (\ell,i)+ \J_+f(\ell,i)+ \J_-f(\ell,i),\quad\text{where}\\ 
      \T f(\ell,i) & =&  \partial_{\ell_i}f(\ell,i)\,,\\
    \J_\pm f(\ell,i) & =&  q^\pm (\ell,i)\bigl(f(\ell,i\pm 1)-f(\ell, i)\bigr)\,.
\end{eqnarray*}

Next we identify the stationary distribution. Consider the Laplacian on $\Z_n$ given by
\begin{eqnarray}\label{eq:Deltan}
    \Delta_n &=& \begin{pmatrix}
        -2&1&0&\cdots&1\\
        1&-2&1&\cdots&0\\
        0&1&-2&\cdots&0\\
        \vdots&\vdots&\vdots&\ddots&\vdots\\
        1&0&0&\cdots&-2
    \end{pmatrix}\in\R^{n\times n}\,,\ \ 
\end{eqnarray}
that is $ (\Delta_nl)_i = l_{i+1}-2l_i+l_{i-1} = l_i^\prime-l_{i-1}^\prime\,$.
Since $\ker\Delta_n = \langle\mathbf{1}\rangle\subset\R^n$, $\Delta_n\colon \S\to\S$ is a bijection.
Hence the probability measure
\begin{equation*}
    \mu = \mathcal{N}(0,-\Delta_n^{-1})
\end{equation*}
is well-defined and {supported on }$\S$.
Note that $\mu$ has a density proportional to $\exp(-U)$ with respect to the $(n-1)$-dimensional Haar measure on $\S$, where
\begin{equation*}
    U(\ell) = -\frac{1}{2}\ell^\top\Delta_n\ell= \frac{1}{2}\sum_{i=1}^n(\ell_{i+1}-\ell_{i})^2 .
\end{equation*}

\begin{lemma}\label{lem:LhatProperties} 
    \begin{enumerate}[(i)]
        \item The probability measure $\hat\mu = \mu\otimes\unif(\Z_n)$ is invariant for the Markov process $(\mathcal L(t),X(t))_{t\geq 0}$.
        \item Functions $f\in C_b^1(\S\times\mathbb Z_n)$ are contained in the domains of the adjoints of the operators $\T$, $\J_+$, $\J_-$ and $\hatL$ in $L^2(\hat\mu)$, and
        \begin{align*}
            \T^*f(\ell,i) &\ =\ -\T f(\ell,i) - (\Delta_n\ell)_if(\ell,i)\,,\\
            \J_\pm ^*f(\ell,i) &\ =\ q^\pm (\ell,i\mp 1)f(\ell,i\mp 1) - q^\pm (\ell,i)f(\ell,i)\,,\\
             \hatL ^*f(\ell,i)&\ =\ -\T f(\ell,i)+ (\ell_i')_+\big(f(\ell,i+1)-f(\ell,i)\big) + (\ell_{i-1}')_-\big(f(\ell,i-1)-f(\ell,i)\big)\,.
        \end{align*}
    \end{enumerate}
\end{lemma}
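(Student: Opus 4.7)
The plan is to establish (ii) first and then deduce (i) immediately from the observation that applying the formula for $\hatL^*$ to the constant function $\mathbf{1}$ yields $\hatL^*\mathbf{1}=0$, which is the dual characterisation of invariance of $\hat\mu$.

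For $\T^*$, I would use that $\mu$ has density proportional to $e^{-U}$ with respect to the $(n-1)$-dimensional Haar measure on $\S$, while $\T = \partial_{\ell_i}$ is directional differentiation along the constant vector $e_i - n^{-1}\mathbf{1} \in \S$. Integration by parts on $\S$, justified for $f,g \in C_b^1$ by the boundedness of $f,g$, the Gaussian decay of $\mu$, and the fact that constant vector fields have vanishing divergence with respect to Haar measure, yields
\begin{equation*}
\int (\T g)\, f \, d\mu \ =\ -\int g\, (\T f)\, d\mu + \int g f\, (\partial_{\ell_i} U)\, d\mu.
\end{equation*}
Since $\nabla U(\ell) = -\Delta_n\ell$ and $\mathbf{1}^\top \Delta_n = 0$, the projected derivative satisfies $\partial_{\ell_i} U(\ell) = -(\Delta_n\ell)\cdot(e_i - n^{-1}\mathbf{1}) = -(\Delta_n\ell)_i$, giving the claimed formula for $\T^*$. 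For $\J_\pm^*$, the uniform distribution on $\Z_n$ is invariant under the shift $i\mapsto i\pm 1$, so a reindexing of the sum $\sum_{i\in\Z_n}$ transfers the shift from one factor to the other and directly produces the stated expression.

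To obtain $\hatL^*$, I would add $\T^* + \J_+^* + \J_-^*$ and use the algebraic identity
\begin{equation*}
(\Delta_n\ell)_i \ =\ \ell_i' - \ell_{i-1}' \ =\ \bigl((\ell_i')_+ - (\ell_i')_-\bigr) - \bigl((\ell_{i-1}')_+ - (\ell_{i-1}')_-\bigr),
\end{equation*}
which, together with $q^+(\ell,i) = (\ell_i')_-$ and $q^-(\ell,i) = (\ell_{i-1}')_+$, recombines the drift term $-(\Delta_n\ell)_i f(\ell,i)$ with the diagonal contributions of $\J_\pm^*$ so that only $(\ell_i')_+$ multiplies $f(\ell,i+1) - f(\ell,i)$ and only $(\ell_{i-1}')_-$ multiplies $f(\ell,i-1) - f(\ell,i)$. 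Setting $f\equiv 1$ in this formula gives $\hatL^*\mathbf{1} = 0$, and hence $\int \hatL g \, d\hat\mu = \int g\cdot \hatL^*\mathbf{1}\, d\hat\mu = 0$ for all $g \in C_b^1(\S\times\Z_n)$, which is invariance of $\hat\mu$ and establishes (i).

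The only delicate ingredient is the integration by parts on the affine subspace $\S$ with the projected vector fields $\partial_{\ell_i}$; once the convention of \eqref{projectedvectorfields} is in hand, the rest reduces to routine algebraic bookkeeping, and I do not expect any genuine obstacle.
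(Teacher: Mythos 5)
Your computation of the adjoints in (ii) follows the same route as the paper: integration by parts on $\S$ against the Gaussian density for $\T$, a reindexing (summation by parts over $\Z_n$) for $\J_\pm$, and then the algebraic recombination $(\Delta_n\ell)_i = \ell_i'-\ell_{i-1}'$ with $q^+(\ell,i)=(\ell_i')_-$, $q^-(\ell,i)=(\ell_{i-1}')_+$ to obtain the stated formula for $\hatL^*$. That part is fine and is essentially the paper's argument.

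The gap is in your deduction of (i). From $\hatL^*\mathbf{1}=0$ you obtain only the \emph{infinitesimal} invariance $\int \hatL g\,\diff\hat\mu = 0$ for all $g\in C_b^1(\S\times\Z_n)$, and this is not, by itself, ``the dual characterisation of invariance'' of $\hat\mu$ for the Markov process $(\mathcal L(t),X(t))$. Infinitesimal invariance on a class of test functions implies invariance of the measure under the transition semigroup only under additional conditions (e.g.\ that the test class is a core for the $L^2$ generator, or a non-explosion/conservativeness argument); at this stage nothing of the sort has been established, and note that the jump rates $q^\pm(\ell,i)$ are unbounded in $\ell$, so this is not a mere formality. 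The paper closes exactly this step by invoking a result for piecewise deterministic Markov processes (\cite[Theorem 21]{Durmus2021PDMP}), which converts the identity $\int\hatL g\,\diff\hat\mu=0$ on $C_b^1$ into invariance of $\hat\mu$ for the process. Your proof needs an analogous ingredient; without it, the passage from the generator computation to statement (i) is unjustified.
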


\begin{proof}
By an elementary computation based on integration and summation by parts, one verifies that for any $f,g\in C_b^1(\S\times\mathbb Z_n)$ and for $\mathfrak A\in\{ \T ,\J_+,\J_-\}$,
\begin{eqnarray}\label{eq:ibp}
    \sum_{i=1}^n\int f(\ell ,i)\, (\mathfrak A g)(\ell ,i)\, \mu (\diff\ell )\ =\ \sum_{i=1}^n\int (\mathfrak A^*f)(\ell ,i)\, g(\ell ,i)\, \mu (\diff\ell ), 
\end{eqnarray}
where $\T^*$, $\J_+^*$ and $\J_-^*$ are given by the expressions in the statement of the lemma. Consequently, a corresponding identity holds for $\mathfrak A=\hatL$ with
\begin{align*}
            \MoveEqLeft\hatL ^*f(\ell,i) = \T^*f(\ell,i) + \J_+^*f(\ell,i) + \J_-^*f(\ell,i)\\
            &=-\T f(\ell,i) - (\ell_i'-\ell_{i-1}')f(\ell,i) + (\ell_i')_+f(\ell,i+1) - (\ell_i')_-f(\ell,i)\\
            &\qquad + (\ell_{i-1}')_-f(\ell,i-1) - (\ell_{i-1})_+f(\ell,i)\\
            &= - \T f(\ell,i) + (\ell_i')_+\big(f(\ell,i+1)-f(\ell,i)\big) + (\ell_{i-1}')_-\big(f(\ell,i-1)-f(\ell,i)\big) .
        \end{align*}
In particular, since $\hatL^*1=0$,
$$\int \hatL f \diff\hat\mu\ =\ \frac 1n\sum_{i=1}^n\int \hatL f(\ell ,i)\, \mu (\diff\ell )
\ =\  0$$
holds for all $f\in C_b^1(\S\times\mathbb Z_n)$. This implies invariance of $\hat\mu$ by 
\cite[Theorem 21]{Durmus2021PDMP}. The second part of the claim now follows directly from \eqref{eq:ibp}.
\end{proof}

By \Cref{lem:LhatProperties}, the transition function of the Markov process
$(\mathcal L(t),X(t))_{t\ge 0}$ induces a strongly continuous contraction semigroup on $L^2(\S\times\mathbb Z_n,\hat\mu )$ that we denote by $(\hat P_t)_{t\ge 0}$. The generator of this semigroup and its domain are denoted by $(\hatL ,\dom(\hatL ))$, it extends the operator 
$(\hatL ,C_b^1(\S\times\mathbb Z_n)) $ introduced above.
  
\begin{remark}
    \Cref{lem:LhatProperties} shows that for $f\in C_b^1(\S\times\mathbb Z_n)$, the symmetric and antisymmetric part 
    of the generator $\hatL$ are given by
    \begin{eqnarray*}
        \frac{1}{2}(\hatL+\hatL^*)f(\ell,i) &=& |\ell_i'|\big(f(\ell,i+1)-f(\ell,i)\big) + |\ell_{i-1}'|\big(f(\ell,i-1)-f(\ell,i)\big)\, ,\\
        \frac{1}{2}(\hatL-\hatL^*)f(\ell,i)&=&\T f(\ell,i)-\ell_i'\big(f(\ell,i+1)-f(\ell,i)\big) + \ell_{i-1}'\big(f(\ell,i-1)-f(\ell,i)\big)\,.
    \end{eqnarray*}  
    Thus the symmetric part is the generator of a jump process on the second component with rates depending on the first component, whereas the antisymmetric part is not the generator of a Markov process.
\end{remark}

\subsection{The self-repellent random walk as a second-order lift}

Inspired by the notion of lifts of Markov chains in discrete time \cite{Diaconis2000Lift,Chen1999Lift}, second-order lifts were introduced as a counterpart in continuous time and space in \cite{EberleLoerler2024Lifts}. They provide a suitable framework for the study of convergence to equilibrium of many non-reversible dynamics by viewing them as lifts of reversible diffusions. Intuitively, a
Markov process on a product space $\mathcal S\times\mathcal V$ with invariant probability measure $\mu \otimes\kappa$ is a second-order lift of a Markov process
with state space $\mathcal S$ and invariant measure $\mu$ if in the limit as $t\downarrow 0$, the transition semigroups
$(\hat P_t)_{t\ge 0}$ and $(P_t)_{t\ge 0}$ satisfy
\begin{equation}\label{secondorderliftsemigroup}
 \int_\V \hat P_t(f\circ\pi )(\ell ,x)\,\kappa (\diff x)=(P_{t^2}f)(x)+o(t^2)    
\end{equation}
for functions $f$ in the domain of the generator $\mathfrak L$ of $(P_t)_{t\ge 0}$, where $\pi\colon\S\times\mathcal V\to\S$ is the canonical projection $\pi(\ell,x)=\ell$. A precise formal definition is given below. 

Although a Markov process and its lift can behave very differently for long times, the lift property \eqref{secondorderliftsemigroup} can be used to study convergence to equilibrium of the lifted process based on spectral properties of $\mathfrak L$ \cite{EberleLoerler2024Lifts, EberleLoerler2024Spacetime, EGHLM2025Convergence}.

We will show that the self-repellent random walk considered above is a 
second-order lift of the reversible diffusion process 
$(Z(t))_{t\geq 0}$ with state space $\S$ that solves the discrete stochastic heat equation
\begin{equation}\label{eq:SHE}
    \diff Z(t)\ =\ \frac{1}{2n}\Delta_nZ(t)\diff t \;+\; \frac{1}{\sqrt{n}}\diff W(t)\,.
\end{equation}
Here $W$ is a Brownian motion on $\S$, i.e.\ $W(t)=B(t)-n^{-1}B(t)\cdot\mathbf{1}\,\mathbf{1}  $ is the orthogonal projection of a Brownian motion $B$ in $\mathbb R^n$ to 
the linear subspace $\S$. The process $(Z(t))_{t\geq 0}$ is a reversible diffusion process with invariant measure $\mu$, and
its $L^2(\mu )$ generator $(\L,\dom(\L))$ is given by 
 \begin{equation}\label{generatorOLD}
    \L g(\ell)\ =\ \frac{1}{2n}\sum_{i=1}^n\left(\partial_{\ell_i}^2g(\ell) + (\Delta_n\ell)_i\partial_{\ell_i}g(\ell)\right)\ =\ -\frac{1}{2n}\grad^*\grad g(\ell),
\end{equation}
where $\grad$ is the gradient on $\S$, the adjoint is in $L^2(\mu)$, and $\dom(\L) = H^{2,2}(\S ,\mu)$. Note that this is an Ornstein-Uhlenbeck process slowed down by a factor $1/(2n)$. In particular, the spectral gap of the generator coincides with that of $\frac{1}{2n}\Delta_n$, see \cite{Metafune2002Spectrum}. The eigenvectors of $\Delta_n$ are $\alpha_k = (\cos(\frac{2\pi k}{n}i))_{i\in\Z_n}$, $k\in\{0,\dots,n-1\}$ with $\Delta_n\alpha_k = 2(1-\cos(\frac{2\pi k}{n}))\alpha_k$. Therefore,
\begin{equation}\label{eq:gapcollapse}
    \mathrm{gap}(\L )\ =\ \frac{1}{2n}\mathrm{gap}(\Delta_n)\ =\ \frac 1n\left( 1-\cos\left({2\pi}/n\right)\right)\ \in\ \Theta (n^{-3})\,.
\end{equation}

We now state the formal definition of a second-order lift that is motivated by a Taylor expansion of \eqref{secondorderliftsemigroup}, see \cite{EberleLoerler2024Lifts} for further motivation and examples.
The process $(\mathcal L(t),X(t))_{t\geq 0}$ with $L^2(\hat\mu )$ generator $(\hat\L ,\dom (\hat\L ))$ is called a \emph{second-order lift} of $(Z(t))_{t\geq 0}$, if there exists a core $C$ of $(\L,\dom(\L))$ such that
\begin{align}\label{eq:deflift0}
    f \circ \pi \in \dom(\hatL) \text{ for all } f \in C
\end{align}
and for all $f,g\in C$ we have
\begin{equation}\label{eq:deflift1}
     \big< \hatL (f \circ \pi), g \circ \pi \big>_{L^2(\hat\mu)} = 0\,,
\end{equation}
and
\begin{equation}\label{eq:deflift2}
    \frac{1}{2}\big< \hatL (f \circ \pi), \hatL(g \circ \pi) \big>_{L^2(\hat\mu)} = -\langle f, \L g\rangle_{L^2(\mu)}\,.
\end{equation}
Conversely, we refer to the process $(Z(t))_{t\ge 0}$ as the \emph{collapse} of $(\mathcal L(t),X(t))_{t\geq 0}$.

\begin{theorem}\label{thm:SRMlift}
    The self-repellent random walk $(\mathcal L(t),X(t))$ is a second-order lift of $(Z(t))$.
\end{theorem}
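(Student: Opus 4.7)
The plan is to verify the three defining conditions \eqref{eq:deflift0}--\eqref{eq:deflift2} directly by choosing a convenient core for $\L$ and observing that, because $f\circ\pi$ depends only on the $\ell$-coordinate, the jump parts $\J_\pm$ contribute nothing. Concretely, I would take $C = C_b^2(\S)$ (or the smaller $C_c^\infty(\S)$, which is well-known to be a core for the Ornstein--Uhlenbeck generator $\L$). For $f\in C$, the extension $f\circ\pi$ belongs to $C_b^1(\S\times\Z_n)$, which by \Cref{lem:LhatProperties} and the remarks preceding the theorem sits in $\dom(\hatL)$, giving \eqref{eq:deflift0}.

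The key computation is that, since $f\circ\pi(\ell,i\pm1)=f\circ\pi(\ell,i)=f(\ell)$, both jump terms vanish and
\begin{equation*}
    \hatL(f\circ\pi)(\ell,i)\ =\ \T(f\circ\pi)(\ell,i)\ =\ \partial_{\ell_i}f(\ell)\,.
\end{equation*}
Here $\partial_{\ell_i}$ is the directional derivative along $\tilde e_i := e_i-n^{-1}\mathbf{1}$, the projection of $e_i$ onto $\S$. Two linear-algebraic identities on these vectors will do all the work. First, $\sum_{i=1}^n\tilde e_i=0$, so $\sum_{i=1}^n\partial_{\ell_i}f(\ell)=0$ for every differentiable $f$ on $\S$. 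Second, $\sum_{i=1}^n\tilde e_i\tilde e_i^\top = I-n^{-1}\mathbf{1}\mathbf{1}^\top$, which is precisely the orthogonal projection onto $\S$, so for any $u,v\in\S$ one has $\sum_i (\tilde e_i\cdot u)(\tilde e_i\cdot v)=u\cdot v$.

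Condition \eqref{eq:deflift1} then follows immediately:
\begin{equation*}
    \langle \hatL(f\circ\pi),g\circ\pi\rangle_{L^2(\hat\mu)}\ =\ \tfrac 1n\sum_{i=1}^n\int \partial_{\ell_i}f(\ell)\,g(\ell)\,\mu(\diff\ell)\ =\ \tfrac 1n\int g(\ell)\sum_{i=1}^n\partial_{\ell_i}f(\ell)\,\mu(\diff\ell)\ =\ 0\,.
\end{equation*}
For \eqref{eq:deflift2}, applying the second identity pointwise to $u=\grad f(\ell)$, $v=\grad g(\ell)\in\S$ gives $\sum_i\partial_{\ell_i}f\cdot\partial_{\ell_i}g=\grad f\cdot\grad g$, whence
\begin{equation*}
    \tfrac 12\langle\hatL(f\circ\pi),\hatL(g\circ\pi)\rangle_{L^2(\hat\mu)}\ =\ \tfrac{1}{2n}\int \grad f\cdot\grad g\,\diff\mu\ =\ \tfrac{1}{2n}\langle\grad f,\grad g\rangle_{L^2(\mu)}\ =\ -\langle f,\L g\rangle_{L^2(\mu)},
\end{equation*}
by the representation $\L=-\tfrac{1}{2n}\grad^*\grad$ in \eqref{generatorOLD}.

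I do not expect a serious obstacle: the only non-routine point is the bookkeeping with the projected derivatives $\partial_{\ell_i}$, i.e.\ convincing oneself that the $n$ linearly dependent directions $\tilde e_i$ still reproduce the intrinsic inner product on the $(n-1)$-dimensional subspace $\S$. This is exactly the content of the two identities above. A small care should also be given to justifying that $C_b^2(\S)$ (resp.\ $C_c^\infty(\S)$) is indeed a core of $\L$, but this is standard for the $\mu$-symmetric Ornstein--Uhlenbeck operator on a finite-dimensional Gaussian space.
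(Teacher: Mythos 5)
Your proposal is correct and takes essentially the same route as the paper: identify $\hatL(f\circ\pi)(\ell,i)=\partial_{\ell_i}f(\ell)$ because the jump terms vanish, use $\sum_{i=1}^n(e_i-n^{-1}\mathbf{1})=0$ for \eqref{eq:deflift1}, and use $\sum_{i=1}^n\partial_{\ell_i}f\,\partial_{\ell_i}g=\grad f\cdot\grad g$ together with $\L=-\frac{1}{2n}\grad^*\grad$ for \eqref{eq:deflift2}, with $C_\c^\infty(\S)$ as the core (the paper cites \cite{Wielens1985Selfadjointness} for this). The only cosmetic difference is that the paper verifies the quadratic form and polarises, whereas you check the bilinear identity directly via the projection identity $\sum_i\tilde e_i\tilde e_i^\top=I-n^{-1}\mathbf{1}\mathbf{1}^\top$.
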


\begin{proof}
    The smooth compactly supported functions $C_\c^\infty(\S)$ form a core for $(\L,\dom(\L))$ \cite{Wielens1985Selfadjointness}. For $f\in C_\c^\infty(\S)$ we have $f\circ\pi\in\dom(\hatL)$ and $\hatL (f\circ\pi)(\ell,x) = \partial_{\ell_x}f(\ell)$, so that
    \begin{equation*}
     \big< \hatL (f \circ \pi), g \circ \pi \big>_{L^2(\hat\mu)} =  \frac{1}{n}\sum_{i=1}^n \int \partial_{\ell_i}f(\ell)\, g(\ell )\, \mu (\diff\ell )= 0\,,
    \end{equation*}
    which implies \eqref{eq:deflift1}. Here we have used that $\partial_{\ell_i}$ is the directional derivative in direction $e_i-n^{-1}\mathbf{1}$, and these vectors sum to $0$. Furthermore,
    \begin{equation*}
        \frac{1}{n}\sum_{i=1}^n \big(\hatL(g\circ\pi)(\ell,i)\big)^2 = \frac{1}{n}\sum_{i=1}^n(\partial_{\ell_i}g(\ell))^2 = \frac{1}{n}\left|\grad g(\ell)\right|^2\,,
    \end{equation*}
    so that $\frac{1}{2}\int(\hatL(g \circ\pi))^2\diff\hat\mu = -\int g\L g\diff\mu$, which implies \eqref{eq:deflift2} by polarisation.
\end{proof}

\subsection{Bounds for relaxation times}

Although the invariant measure is a product of a Gaussian and a uniform distribution, sharp bounds on the exponential rate of convergence to equilibrium are not known for the Markov process $(\mathcal L(t),X(t))$. Even the order of the dependence of the convergence rate on $n$ is unknown. The reason is that on the one hand, the process is non-reversible so that 
standard tools do not apply, and on the other hand, the combined dynamics consisting 
of jumps in the second component and deterministic moves in the first component does
not reflect the special structure of the Gaussian measure in an obvious way.

\medskip

We first note that the representation of the self-repellent random walk as a lift of the stochastic heat equation enables us to give a lower bound on the $L^2$-relaxation time
$$t_{\rel}(\hat P)\ =\ \inf\left\{ t\ge 0 :\| \hat P_tf\|_{L^2(\hat\mu )}\le e^{-1}\| f\|_{L^2(\hat\mu )}\text{ for all }f\in L_0^2(\hat\mu )\,\right\} .$$
Here $L_0^2(\hat\mu )$ denotes the orthogonal complements of the constant functions in $L^2(\hat \mu )$.
Theorem 11 in \cite{EberleLoerler2024Lifts} provides a lower bound on the relaxation time of arbitrary second-order lifts in terms of the square root of the spectral gap of the collapse. We give a slight improvement in terms of the constant prefactor with a short self-contained proof in \Cref{thm:lowerbound}.

\begin{theorem}[Lower bound on relaxation time]\label{thm:lowerbound}
Let $(P_t)_{t\geq 0}$ be the transition semigroup of a reversible Markov process and $(\hat P_t)_{t\geq 0}$ an arbitrary second-order lift. Then
\begin{equation*}
    t_\rel(\hat P)\ \geq \ \frac{1-e^{-1}}{\sqrt{2}}\sqrt{t_\rel(P)}\,.
\end{equation*}
In particular, for the self-repellent random walk,
   $$ t_{\rel}(\hat P)\ \geq \ \frac{1-e^{-1}}{2\pi} n^{3/2}  \,.$$
\end{theorem}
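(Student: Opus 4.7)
The plan is to exploit the two defining properties \eqref{eq:deflift1} and \eqref{eq:deflift2} of a second-order lift together with the first-order integral identity
\[
\hat P_tu - u \ =\ \int_0^t\hat P_s\hatL u\,ds,
\]
valid for $u\in\dom(\hatL)$. The intuition is that for a lifted test function $u=f\circ\pi$, the derivative of $t\mapsto\langle\hat P_tu,u\rangle_{L^2(\hat\mu)}$ vanishes at $t=0$ by \eqref{eq:deflift1}, so on short timescales the norm of $\hat P_tu$ can only decay quadratically, with quadratic coefficient controlled by $\langle f,\L f\rangle$ via \eqref{eq:deflift2}.

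I would then pick an eigenfunction $f\in\dom(\L)\cap L^2_0(\mu)$ with $\L f=-\lambda f$, $\lambda=1/t_{\rel}(P) = \gap(\L)$, normalised so that $\|f\|_{L^2(\mu)}=1$. Setting $g=f$ in \eqref{eq:deflift2} yields
\[
\|\hatL(f\circ\pi)\|_{L^2(\hat\mu)}^2 \ = \ -2\langle f,\L f\rangle_{L^2(\mu)} \ =\ 2\lambda .
\]
Because $\hat\mu$ is invariant, $\hat P_s$ is an $L^2(\hat\mu)$-contraction, so the integral identity gives $\|\hat P_t(f\circ\pi)-f\circ\pi\|_{L^2(\hat\mu)}\leq t\sqrt{2\lambda}$. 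Combining with $\|f\circ\pi\|_{L^2(\hat\mu)}=\|f\|_{L^2(\mu)}=1$ via the reverse triangle inequality gives
\[
\|\hat P_t(f\circ\pi)\|_{L^2(\hat\mu)} \ \geq\ 1-t\sqrt{2\lambda}.
\]
For $t<(1-e^{-1})/\sqrt{2\lambda}$ the right-hand side strictly exceeds $e^{-1}$. Since $\int f\,d\mu=0$ implies $f\circ\pi\in L_0^2(\hat\mu)$, the definition of $t_{\rel}(\hat P)$ forces
\[
t_{\rel}(\hat P)\ \geq\ \frac{1-e^{-1}}{\sqrt{2\lambda}}\ =\ \frac{1-e^{-1}}{\sqrt{2}}\sqrt{t_{\rel}(P)}.
\]

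For the specialisation to the self-repellent random walk I would insert \eqref{eq:gapcollapse}. Using $1-\cos(2\pi/n)=2\sin^2(\pi/n)\leq 2\pi^2/n^2$ gives $t_{\rel}(P)\geq n^3/(2\pi^2)$, hence $\sqrt{t_{\rel}(P)}\geq n^{3/2}/(\pi\sqrt 2)$, and multiplying through produces the claimed prefactor $(1-e^{-1})/(2\pi)$.

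The main technical point I anticipate is purely domain bookkeeping: the lift definition only guarantees $f\circ\pi\in\dom(\hatL)$ for $f$ in the core $C_\c^\infty(\S)$, while Hermite-type eigenfunctions of $\L$ are not compactly supported. I would resolve this by approximating the chosen eigenfunction $f$ in the graph norm of $\L$ by a sequence $f_k\in C_\c^\infty(\S)$ (possible since $C_\c^\infty(\S)$ is a core, by \cite{Wielens1985Selfadjointness}) and passing to the limit, using the closedness of $\hatL$ and the continuity of $\hat P_t$ on $L^2(\hat\mu)$. Alternatively, one may start from the Rayleigh--Ritz variational characterisation of $\gap(\L)$ and take an almost-minimiser already in $C_\c^\infty(\S)$; in either case all norm estimates above survive the limit and yield the stated bound.
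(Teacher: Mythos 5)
Your proposal is correct and follows essentially the same route as the paper: the first-order integral identity plus contractivity gives $\|\hat P_t(f\circ\pi)\|\geq\|f\circ\pi\|-t\|\hatL(f\circ\pi)\|$, the lift identity \eqref{eq:deflift2} gives $\|\hatL(f\circ\pi)\|=\sqrt{2\lambda}\,\|f\circ\pi\|$, and the specialisation uses \eqref{eq:gapcollapse} with $1-\cos(2\pi/n)\leq 2\pi^2/n^2$, exactly as in the paper. The only cosmetic difference is that the paper works directly with general $g$ having Rayleigh quotient $\lambda$ and lets $\lambda\to\gap(\L)$, which is precisely your fallback "almost-minimiser" variant, so your domain/approximation remarks are in line with (indeed slightly more careful than) the published argument.
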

\begin{proof}
    For any $f\in\dom(\hatL)\cap L_0^2(\hat\mu)$ and $t\geq 0$, we have
    \begin{equation*}
        \norm{\hat P_t f-f}_{L^2(\hat\mu)} \leq \int_0^t\norm{\hat P_s\hatL f}_{L^2(\hat\mu)}\diff s \leq t\norm{\hatL f}_{L^2(\hat\mu)}\,,
    \end{equation*}
    so that
    \begin{equation*}
        \norm{\hat P_t f}_{L^2(\hat\mu)} \geq \norm{f}_{L^2(\hat\mu)} - \norm{\hat P_t f-f}_{L^2(\hat\mu)} \geq \norm{f}_{L^2(\hat\mu)} - t\norm{\hatL f}_{L^2(\hat\mu)}\,.
    \end{equation*}
    For any $g\in L_0^2(\mu)$ and $\lambda>0$ such that $\langle g,-\L g\rangle_{L^2(\mu)} = \lambda\norm{g}_{L^2(\mu)}^2$, we have $\norm{\hatL(g\circ\pi)} = \sqrt{2\lambda}\norm{g\circ\pi}_{L^2(\hat\mu)}$ by the second-order lift property \eqref{eq:deflift2}, so that
    \begin{equation*}
        \norm{\hat P_t(g\circ\pi)}_{L^2(\hat\mu)} \geq (1-\sqrt{2\lambda}t)\norm{g\circ\pi}_{L^2(\hat\mu)}\qquad\text{for all }t\geq 0\,.
    \end{equation*}
    In particular, we obtain $t_\rel(\hat P)\geq \frac{1-e^{-1}}{\sqrt{2}}\lambda^{-1/2}$. Considering the limit $\lambda\to\gap(\L)$ yields the claim since $t_\rel(P) = \gap(\L)^{-1}$ by reversibility.
    The lower bound for the self-repellent random walk follows from \eqref{eq:gapcollapse} and $1-\cos(x)\leq\frac{x^2}{2}$.
\end{proof}


 \begin{remark}
        Based on numerical and heuristic evidence \cite{Krauth2024HMCvsECMC,Massoulie2025Velocity} and a Bethe ansatz calculation \cite{Essler2025Lifted} for related models, it has been conjectured in the physics literature that 
        the relaxation time 
        is
    of order $\Theta(n^2)$. Mathematically rigorous proofs for quantitative  upper and lower bounds on the relaxation time of this order are missing. Physics inspired arguments mostly concern different discrete models such as the lifted totally antisymmetric simple exclusion process \cite{EsslerKrauth2024LiftedTASEP}. Here the special combinatorial structure may allow for other approaches for bounding the relaxation time. Even if this were possible, the development of a more generally applicable analytic approach is important. 
\end{remark}

Next, as a first step in the direction of a rigorous proof of upper bounds on relaxation times, we 
consider a modified dynamics where we include additional particle moves that occur with an exponential rate
$\gamma >0$, i.e., we consider the piecewise deterministic Markov process with generator
\begin{equation}\label{eq:Lhatsplitting}
    \hatLgamma f \ =\ \hatL f\ +\ \gamma\,(Q f-f)\, ,\qquad\dom(\hatLgamma) = \dom(\hatL)\,,
\end{equation}
where $Q$ is the generator of a Markov chain on $\mathbb Z_n$ which only acts
on the second component of a function $f\colon\S\times\mathbb Z_n\to\mathbb R$. 
We denote the associated transition semigroup acting on $L^2(\hat\mu)$ by $(\hat P_t^{(\gamma)})_{t\geq 0}$.
Since $Q$ only acts on the second component, as in \Cref{thm:SRMlift}, this modified dynamics is still a lift of the process $(Z(t))_{t\geq 0}$ solving the discrete stochastic heat equation \eqref{eq:SHE}. In particular, the lower bound on the relaxation time in \Cref{thm:lowerbound} still applies to the modified dynamics.
If $Q=\Pi $, where
\begin{equation*}
    \Pi f(\ell,x)\ =\ \frac{1}{n}\sum_{i=1}^nf(\ell,i)\,,
\end{equation*}
then the second component is resampled from the uniform distribution on $\Z_n$ after independent $\mathrm{Exp} (\gamma )$-distributed waiting times. In this case, we can derive the following bound on convergence to equilibrium.

\begin{theorem}\label{thm:convrate}
    If $Q=\Pi$, then the transition semigroup $(\hat P_t^{(\gamma)})_{t\geq 0}$ satisfies
    \begin{equation}\label{eq:SRMconvergence}
        \norm{\hat P_t^{(\gamma)}f}_{L^2(\hat\mu)} \leq e^{-\nu (t-T)}\norm{f}_{L^2(\hat\mu)}\qquad\text{for all }f\in L_0^2(\hat\mu)\text{ and }t\geq 0
    \end{equation}
    with $T\in O(n)$ and inverse convergence rate
    $$\nu^{-1}\in O\left(\gamma n^3+\gamma^{-1}n\right).$$ 
    In particular,
    for $\gamma\propto n^{-1}$, the $L^2$-relaxation time is of order $O(n^{2})$.
\end{theorem}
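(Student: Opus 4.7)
The plan is to combine the lift structure from \Cref{thm:SRMlift} with a flow / space-time Poincaré inequality in order to transform the $\Theta(n^{-3})$ spectral gap of the collapse into an $O(n^{-2})$ rate for the lifted dynamics, balanced against the direct contraction produced by the refreshment.

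I would first decompose $L_0^2(\hat\mu) = \mathcal H \oplus \mathcal H^\perp$, where $\mathcal H = \Pi L_0^2(\hat\mu) = \{g\circ\pi : g\in L_0^2(\mu)\}$. On $\mathcal H^\perp$ the refreshment term $\gamma(\Pi-I)$ acts as $-\gamma I$, so contributions in $\mathcal H^\perp$ decay at rate $\gamma$ up to the coupling with $\mathcal H$ through the off-diagonal part of $\hatL$. On $\mathcal H$ the refreshment is inactive and decay must come from the lift identity $\hatL(g\circ\pi)(\ell,i) = \partial_{\ell_i}g(\ell)$, which together with \eqref{eq:deflift2} already indicates that the relevant time scale is of order $\sqrt{\gap(\L)^{-1}} \sim n^{3/2}$.

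The core technical step is a space-time Poincaré inequality of the form
\[
\norm{g}_{L^2(\mu)}^2 \ \leq \ \frac{C(\gamma,n)}{T}\int_0^T \norm{\hatLgamma \hat P_s^{(\gamma)}(g\circ\pi)}_{L^2(\hat\mu)}^2 \diff s \qquad \text{for all } g\in L_0^2(\mu),
\]
with $T\in O(n)$ and $C(\gamma,n) \in O(\gamma n^3 + \gamma^{-1} n)$. The strategy is to construct, for each eigenmode $\alpha_k$ of $\Delta_n$, an explicit flow in the lift over a time interval of length $T\sim n$: the refreshment at rate $\gamma$ forces the second component $X$ to nearly-equilibrate on $\Z_n$, while the transport operator $\T$ accumulates the line integral $\int_0^T \partial_{\ell_{X(s)}}\diff s$, which after this averaging reproduces the directional derivative in direction $\alpha_k$ up to a controlled error. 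Bounding the $L^2$-cost of this flow mode by mode produces both contributions: $\gamma n^3$ from realising the slow low-frequency modes of the collapse (relaxation rate $\asymp k^2/n^3$), and $\gamma^{-1}n$ from the refreshment-induced mixing layer on $\Z_n$. I expect this flow construction to be the main obstacle, not least because the jump rates of $\J_\pm$ depend on $\ell$ itself, so the effective dynamics of the second component is not purely driven by the refreshment; one needs a priori $L^2$-control on $|\ell_i'|$ under $\hat\mu$ (these are centred Gaussian with $\Theta(1)$ variance) in order to replace the full $X$-dynamics by its averaged version on time scales of order $n$.

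The conclusion then follows by standard hypocoercive bookkeeping: using \Cref{lem:LhatProperties} one computes $\tfrac{\diff}{\diff t}\norm{\hat P_t^{(\gamma)}f}_{L^2(\hat\mu)}^2 = -2\Ecal(\hat P_t^{(\gamma)}f)$, splits the Dirichlet form $\Ecal$ into its jump and refreshment parts (which together control the $\mathcal H^\perp$-component of $\hat P_t^{(\gamma)}f$), and estimates the $\mathcal H$-component via the space-time Poincaré inequality applied to $\Pi \hat P_t^{(\gamma)} f$. A Gronwall argument on rolling intervals of length $T\in O(n)$ then yields \eqref{eq:SRMconvergence}, with the first interval absorbed into the initial layer $T$. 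Finally, the two terms $\gamma n^3$ and $\gamma^{-1} n$ equalise at $\gamma \propto n^{-1}$, giving the advertised $L^2$-relaxation time of order $O(n^2)$.
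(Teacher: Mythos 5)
Your high-level plan is the same as the paper's: the paper also deduces the theorem from a flow Poincaré inequality for the lift (\Cref{thm:abstractconvergence}, a specialisation of \cite[Theorem 12]{EGHLM2025Convergence}), with the refreshment $Q=\Pi$ supplying the symmetric part, $T\propto n$, and $\gamma\propto n^{-1}$ balancing the two contributions; your $\mathcal H\oplus\mathcal H^\perp$ splitting and Gronwall bookkeeping are exactly what that cited result packages. However, the decisive quantitative step is missing from your sketch, and it is precisely the part you flag as ``the main obstacle''. In the paper everything reduces to condition (iii) of \Cref{thm:abstractconvergence}, i.e.\ (after Cauchy--Schwarz) the bound $\norm{\hatL^*\hatL(g\circ\pi)}_{L^2(\hat\mu)}\leq C_1\norm{\L g}_{L^2(\mu)}$ with $C_1^2\in O(n)$. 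Since $\hatL^*\hatL(g\circ\pi)(\ell,x)$ contains terms like $(\ell_x')_+\bigl(\partial_{\ell_{x+1}}g-\partial_{\ell_x}g\bigr)$, one must control $\frac1n\sum_i\int(\ell_i')^2\bigl((\partial_{\ell_i}-\partial_{\ell_{i-1}})g\bigr)^2\diff\mu$ by $n\norm{\L g}_{L^2(\mu)}^2$. Your suggestion that ``a priori $L^2$-control on $|\ell_i'|$'' (variance $\Theta(1)$) suffices is where the argument breaks: $\ell_i'$ and $\nabla g$ are not independent under $\mu$, and Cauchy--Schwarz produces $L^4$-norms of the gradient that cannot be compared to $\norm{\L g}$ at the stated order in $n$. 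The paper resolves this with a Gaussian integration by parts (\Cref{lem:ibpboundmu}), converting the weight $(\ell_i')^2$ into second derivatives of $g$, combined with Bochner's identity (\Cref{lem:bochnerbound}) giving $\frac{1}{n^2}\int\norm{\nabla^2 g}_F^2\diff\mu\leq\int(2\L g)^2\diff\mu$; the explicit computation of $u_i=\Delta_n^{-1}(e_i-e_{i-1})$, which collapses $\partial^2_{u_i,v_i}g$ to the single diagonal entry $-\partial^2_{\tilde e_i,\tilde e_i}g$ rather than a full row of the Hessian, is what keeps $C_1^2$ at order $n$ instead of $n^2$. Without an estimate of this kind your mode-by-mode flow construction is not carried out, and the theorem does not follow from the sketch.

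There is also a misattribution in your heuristic for the rate. With $Q=\Pi$ the active coordinate is resampled exactly, so $m_Q=1$ and there is no ``refreshment-induced mixing layer on $\Z_n$'' costing a factor $n$; the $\gamma^{-1}n$ term comes entirely from $C_1^2\in O(n)$, i.e.\ from the interaction of the transport $\T$ with the $\ell$-dependent jump rates $\J_\pm$. A genuine $\Z_n$-mixing cost only enters in \Cref{thm:convrate_rwrefresh}, where $m_Q^{-1}\in\Theta(n^2)$ produces the larger term $\gamma^{-1}n^2$. Finally, the choice $T\in O(n)$ should be justified rather than posited: in \eqref{eq:nu} one needs $1/(mT^2)\lesssim C_1^2$, and with $m\in\Theta(n^{-3})$ and $C_1^2\in O(n)$ this is exactly what $T\propto n$ achieves.
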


We defer the proof of \Cref{thm:convrate} to \Cref{ssec:upperboundproof}. The point is that not only the process $(X^{(\gamma )}(t))$ but the joint process $(\mathcal L^{(\gamma )}(t),X^{(\gamma )}(t))$ converges to equilibrium with rate $\nu$.\medskip

In the physics literature, it has been conjectured  that even for $\gamma =0$, the $L^2$-relaxation time of the self-repellent random walk is of order $O(n^2)$, see the remark above. Unfortunately, the upper bound in \Cref{thm:convrate} degenerates in the limit $\gamma\to 0$.\medskip 

A dynamics that is somehow closer to the self-repellent random walk is obtained 
if $Q$ is the transition matrix of a simple random walk on $\Z_n$,
i.e.\ $Q(i,i\pm 1)=1/2$ for all $i\in\Z_n$.
\begin{theorem}\label{thm:convrate_rwrefresh}
    If $Q$ is the transition matrix of a simple random walk, then the corresponding transition semigroup $(\hat P_t^{(\gamma)})_{t\geq 0}$ satisfies
    \begin{equation*}
        \norm{\hat P_t^{(\gamma)}f}_{L^2(\hat\mu)} \leq e^{-\nu (t-T)}\norm{f}_{L^2(\hat\mu)}\qquad\text{for all }f\in L_0^2(\hat\mu)\text{ and }t\geq 0
    \end{equation*}
    with $T\in O(n^{3/2})$ and inverse convergence rate
    $$\nu^{-1}\in O\left(\gamma n^3+\gamma^{-1}n^2\right).$$ 
    In particular,
    for $\gamma\propto n^{-3/2}$, the $L^2$-relaxation time is of order $O(n^{5/2})$.
\end{theorem}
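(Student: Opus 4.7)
The plan is to follow the blueprint of the proof of \Cref{thm:convrate} in \Cref{ssec:upperboundproof}, which uses a flow Poincaré inequality for the lifted generator, and to track how the constants change when the uniform refresh kernel $\Pi$ is replaced by the simple random walk kernel $Q$. The fact that $(\mathcal L^{(\gamma)}(t),X^{(\gamma)}(t))$ is still a second-order lift of the discrete SHE \eqref{eq:SHE} is immediate since $Q$ acts only on the second component, so $Q(g\circ\pi)=g\circ\pi$ for every $g\in C_\c^\infty(\S)$, and the identities \eqref{eq:deflift1} and \eqref{eq:deflift2} in \Cref{thm:SRMlift} carry over without change. In particular, the lower bound of \Cref{thm:lowerbound} continues to apply.

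The core of the argument is again a decomposition of $f\in L^2_0(\hat\mu)$ into its \emph{horizontal} part $(\Pi f)\circ\pi$, which depends only on $\ell$, and its \emph{vertical} remainder $g=f-(\Pi f)\circ\pi$, which has mean zero in the second component for every fixed $\ell$. For the vertical part, the self-adjoint refresh generator $\gamma(Q-I)$ has spectral gap $\gamma(1-\cos(2\pi/n))\in\Theta(\gamma n^{-2})$ on the kernel of $\Pi$, since the simple random walk on $\Z_n$ has gap $1-\cos(2\pi/n)$ rather than the $\Theta(1)$ gap of the uniform kernel. The resulting Poincaré inequality contributes a term of order $\gamma^{-1}n^2$ to the inverse convergence rate, in place of the $O(\gamma^{-1}n)$ contribution obtained in \Cref{thm:convrate}.

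For the horizontal part $(\Pi f)\circ\pi$, I would reuse the flow Poincaré argument of \Cref{thm:convrate}: the lift relation \eqref{eq:deflift2}, combined with the collapse relaxation time $t_\rel(P)\in O(n^3)$ and a space-time flow that interleaves the deterministic $\T$-evolution with refresh transitions, should give a horizontal contribution of order $\gamma\, t_\rel(P)\in O(\gamma n^3)$; the initial-layer correction of the usual order $\sqrt{t_\rel(P)}\in O(n^{3/2})$ accounts for the transient term $T$. Summing the two contributions yields $\nu^{-1}\in O(\gamma n^3+\gamma^{-1}n^2)$, and optimising in $\gamma$ produces the stated relaxation time bound.

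The principal obstacle is the horizontal step: the space-time flow used for $Q=\Pi$ exploits that a single refresh jump can instantaneously decorrelate the second component, whereas with the simple random walk refresh the flow must travel along nearest-neighbour paths of length $\Theta(n)$. A naive reuse of the uniform-refresh construction would inflate the Dirichlet energy by an additional factor of order $n^2$ coming from the path lengths squared. The technical task is therefore to construct a family of flow paths that averages the drift $\T$ along simple-random-walk trajectories in such a way that the horizontal cost remains $O(\gamma n^3)$; this is the point at which the proof genuinely differs from the one for $Q=\Pi$, and where a careful commutator estimate between $\T$ and $\gamma(Q-I)$ is expected to be needed.
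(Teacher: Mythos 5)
Your treatment of the vertical part is correct: the simple random walk kernel has spectral gap $1-\cos(2\pi/n)\in\Theta(n^{-2})$ on mean-zero functions of the second component, and this is what produces the $\gamma^{-1}n^2$ term; your final exponents also match the statement. The genuine gap is the horizontal step, which you yourself leave open: you only assert that the $O(\gamma n^3)$ contribution ``should'' survive the replacement of $\Pi$ by $Q$, and you defer the actual work to a yet-to-be-constructed family of flow paths along nearest-neighbour trajectories together with a commutator estimate between $\T$ and $\gamma(Q-I)$. As written, the central estimate of your argument is a conjecture, not a proof, and your own discussion of the ``naive'' approach losing a factor $n^2$ shows that you do not yet know how to close it.

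The obstacle you anticipate does not in fact arise in the route the paper takes, and recognising this is what makes the proof short. \Cref{thm:abstractconvergence} is already formulated for a general refresh kernel $Q$ with spectral gap $m_Q$, and the constant $C_1$ in condition (iii) comes from the divergence-type bound $\norm{\hatL^*\hatL(g\circ\pi)}_{L^2(\hat\mu)}\leq C_1\norm{\L g}_{L^2(\mu)}$, which involves the projection $\Pi$ and the lift $\hatL$ only — no flow along paths of the refresh chain and no commutator with $\gamma(Q-I)$ enters anywhere. Hence the estimate $C_1^2\in O(n)$ established in the proof of \Cref{thm:convrate} via \Cref{lem:bochnerbound} and \Cref{lem:ibpboundmu} carries over unchanged; the only quantity that changes is $m_Q\in\Theta(n^{-2})$ in condition (iv), which enters additively through the term $\frac{1}{m_Q}\bigl(1+\frac{1}{mT^2}\bigr)$ in \eqref{eq:nu}. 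Taking $T=m^{-1/2}\in O(n^{3/2})$ and $m\in\Theta(n^{-3})$ then gives $\nu^{-1}\in O\bigl(\gamma n^3+\gamma^{-1}(n+n^2)\bigr)=O\bigl(\gamma n^3+\gamma^{-1}n^2\bigr)$ immediately. In other words, the slow refresh penalises only the vertical relaxation (the term you handled correctly), while the horizontal cost is controlled by the $Q$-independent condition (iii); if you want to avoid citing the abstract theorem, this separation is precisely the missing ingredient you would need to prove.
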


\Cref{thm:convrate_rwrefresh} is proved in a similar way as \Cref{thm:convrate}, taking into account the spectral gap of $Q$, see the proof given in \Cref{ssec:upperboundproof}.

\begin{remark}[Self-repellent motion] 
    The \emph{true self-repellent motion} was introduced by Tóth and Werner \cite{TothWerner1998TSRM} as a locally self-repelling stochastic process in continuous space and time, providing a continuous space counterpart to self-repellent random walks \cite{AmitParisiPeliti1983Asymptotic,Toth1995TrueSelfAvoiding,TothVeto2011Continuous}. Heuristically, it can be described as a process $(X(t))_{t\geq0}$ with drift given by the negative spacial derivative of its own local time density $L(t)$ at $X(t)$, even though the local time profile $x\mapsto L(t,x)$ is not differentiable. Precise invariance principles, i.e.\ convergence in distribution of rescaled self-repellent random walks to the true self-repellent motion, have been shown in \cite{newman2006convergence,kosygina2025convergence}. In the following, we give an informal presentation of how the self-repellent motion arises as a formal scaling limit of the process $(\mathcal L(t),X(t))_{t\geq 0}$ considered above, and show consequences for the relaxation time of the self-repellent motion on the torus.
    
    Consider the above dynamics on a discretisation of the one-dimensional torus $\mathbb{T}_N = \R/(N\Z)$ of length $N$ with spacing $h>0$, where we assume that $n=N/h\in\N$.  More precisely, consider the dynamics $(\mathcal L(t),X(t))_{t\geq 0}$ on $\S\times\Z_n$ with generator
    \begin{equation}\label{eq:SRMapproxGenerator}
        \hatL_h\ = \ \frac{1}{h}\T\ +\ \frac{1}{h^2}\J\,.
    \end{equation}
    The invariant probability measure is $\hat\mu_h = \mu_h\otimes\unif(\Z_n)$  with
    \begin{equation*}
        \mu_h(\diff\ell)\propto\exp(-U_h(\ell))\diff \ell\,, \qquad\text{where}\quad U_h(\ell) = \frac{h}{2}\sum_{i=1}^{N/h}\left(\frac{\ell_{i+1}-\ell_i}{h}\right)^2 = \frac{h}{2}\sum_{i=1}^n(\ell_i')^2
    \end{equation*}
    and $\ell_i' = \frac{1}{h}(\ell_{i+1}-\ell_i)$. By \Cref{lem:LhatProperties}, the symmetric and antisymmetric parts of $\hatL_h$ are
    \begin{align*}
        \frac{1}{2}(\hatL+\hatL^*)f(\ell,i) &= \frac{1}{2h}\Big(|\ell_{i-1}'|\big(f(\ell,i-1)-f(\ell,i)\big) +|\ell_i'|\big(f(\ell,i+1)-f(\ell,i)\big)\Big)\quad\text{and}\\
        \frac{1}{2}(\hatL-\hatL^*)f(\ell,i)&=\frac{1}{2h}\Bigl(\ell_{i-1}'\big(f(\ell,i-1)-f(\ell,i)\big)-\ell_i'\big(f(\ell,i+1)-f(\ell,i)\big)+\T f(\ell,i)\Big)\,.
    \end{align*}
    Viewing the dynamics $(\mathcal L(t),X(t))_{t\geq 0}$ on $\S\times\Z_n$ as a discretisation of a process on $C_0(\mathbb{T}_N)\times\mathbb{T}_N$, where $C_0(\mathbb{T}_N)$ denotes the space of continuous functions on $\mathbb{T}_N$ with mean zero, the above decomposition formally suggests that, in the limit $h\downarrow0$, the process should behave like the solution to the SDE
    \begin{align*}
        \diff \mathcal L_t &= \delta_{X_t}\diff t\\
        \diff X_t &= -\mathcal L_t'(X_t)\diff t + \sqrt{h\mathcal L_t'(X_t)}\diff B_t\,,
    \end{align*}
    where $(B_t)_{t\geq0}$ is a one-dimensional Brownian motion. In the formal continuum limit $h\downarrow0$, the diffusion coefficient vanishes, and the result is reminiscent of the true-self repellent motion.
    In this case, since $U_h(\ell)$ is a discretisation of $U(\ell) = \int_{\mathbb{T}_N}(\ell_t')^2\diff t$, the probability measures $\mu_h$ are expected to converge to the law of a zero-mean Brownian bridge on $\mathbb{T}_N$.
    More rigorously, in the limit $h\downarrow0$, the rescaled process
    \begin{equation*}
        (\tilde{\mathcal{L}}(t),\,\tilde X(t))_{t\geq 0}\ = \ (h^{1/2}\mathcal{L}(th^{-3/2}),\, hX(th^{-3/2}))_{t\geq 0}
    \end{equation*}
    converges in law to the self-repellent motion on $\mathbb{T}_N$ and its local time process, see \cite{TothWerner1998TSRM,kosygina2025convergence,newman2006convergence,Toth1995TrueSelfAvoiding}. The generator of the rescaled process arises from \eqref{eq:SRMapproxGenerator} as $h^{-3/2}(\T+\J) = h^{-3/2}\hatL$ via this change of variables. In particular, suppose that $(\hat P_t^{(h)})$ is the transition semigroup of the self-repellent random walk and its local time on $\S\times\Z_n$ with $n = N/h$. Then one would expect that 
    \begin{equation*}
        t_\rel^{\mathrm{SRM}}\ =\ \lim_{h\to 0}\,h^{3/2}\,t_\rel(\hat P^{(h)})\,
    \end{equation*}
    is the relaxation time of the self-repellent motion on $\mathbb{T}_N$ and its local time process, provided the limit exists. Therefore, a relaxation time of the order $t_\rel(\hat P^{(h)}) \in \Theta(n^{3/2})$ would yield $t_\rel^{\mathrm{SRM}}\in \Theta(N^{3/2})$. Such a bound on the relaxation time is consistent with the lower bound in \Cref{thm:lowerbound}, yet out of reach of the upper bound of order $O(n^2)$ in \Cref{thm:convrate}. While a relaxation time of the order $N^{3/2}$ for the self-repellent motion on $\mathbb{T}_N$ is plausible from the superdiffusive scaling $\mathbb{E}[|X_t|] = t^{2/3}\mathbb{E}[|X_1|]$ of the self-repellent motion on $\R$, we stress that a bound on the joint relaxation time of the self-repellent motion and its local time is a stronger statement.


\end{remark}

\section{Application to Markov Chain Monte Carlo methods}\label{sec:ECMC}

It has recently been noted in the physics literature that the self-repellent random walk can be seen as a special instance of an Event Chain Monte Carlo method \cite{Maggs2024Nonreversible,Maggs2025EventChain}. In Markov chain Monte Carlo (MCMC) methods, one constructs Markov processes with a given invariant probability measure to generate approximate samples from a target distribution. 
A standard approach is \emph{Hamiltonian Monte Carlo (HMC)} which combines Hamiltonian dynamics with velocity resamplings \cite{Bou-Rabee2017RHMC}. MCMC methods based on variants of HMC are widely used across many applications ranging from statistics \cite{CGH2017Stan} to molecular dynamics \cite{CLS2007MolecularDynamics}.
\emph{Event Chain Monte Carlo (ECMC)} is a more recently introduced class of MCMC methods based on non-reversible piecewise deterministic Markov processes that can be simulated in a rejection-free way by a thinning procedure \cite{Krauth2021ECMC}. It was originally introduced in the context of hard disks \cite{BernardKrauthWilson2009ECMC} where it enabled the discovery of a previously unknown phase transition.
Since then, it has been generalised to many other models \cite{MichelKapferKrauth2014ECMC,HarlandMichelKampmannKierfeld2017ECMC,MichelDurmusSenecal2020Forward,LeiKrauth2018Mixing}. The Markov processes underlying ECMC are characterised by moving one particle or coordinate at a time on a straight line, and ensuring invariance of the target probability measure by adding ``events'' at which the active particle or coordinate changes with some state-dependent rate. 

For MCMC methods, the rate of convergence of the law of the process at time $t$ towards its invariant distribution determines the efficiency of the scheme and is hence of crucial importance. Classical methods are mostly based on reversible Markov processes. Combined with local moves, reversibility limits the process to diffusive behaviour, prohibiting efficient exploration of the state space and hence impeding rapid convergence to stationarity. 
Non-reversible Markov processes such as ECMC possibly mitigate this issue. They can be constructed by lifting \cite{Chen1999Lift, Diaconis2000Lift, EberleLoerler2024Lifts} a reversible dynamics through introduction of additional variables and thereby enlarging the state space.
Mathematically, the analysis of these processes is challenging due to their non-reversibi\-lity and non-gradient nature, and rigorous results remain scarce or out of reach in situations of practical interest. For this reason, one is interested in studying
toy models that enable a benchmark comparison of different sampling schemes \cite{Krauth2024HMCvsECMC, Neal2004AsymptoticVariance,FBPR2018PDMP}. 

\subsection{Sampling chains of oscillators}

    One class of such toy models are chains of oscillators where $n$ particles are placed on a ring and each particle interacts only with its two neighbours. At inverse temperature $\beta\in (0,\infty )$, the corresponding Gibbs measure is
    $$\mu_\beta (\diff y)\propto\exp(-\beta U(y))\diff y,\quad\text{ where }\quad
        U(y) = \sum_{i=1}^nW\left(y_{i+1}-y_{i}\right)\, .$$
    Here $W\colon\R\to [0,\infty )$ is a continuously differentiable and symmetric interaction potential such that $\exp (-\beta W)$ is integrable.
    Without confinement, the Gibbs measure is not normalisable. This can be fixed by
    pinning the measure, for example by restricting to the subspace $\mathcal S\subset\R^n$ of configurations with mean zero.

    We consider a version of the Event Chain Monte Carlo algorithm which,
    when applied to a Gibbs measure as above, simulates a piecewise deterministic Markov process $(Y(t),X(t))_{t\geq 0}$ with state space $\R^n\times\mathbb Z_n$, where the position vector $Y(t)$ increases in the component corresponding to the active particle $X(t)$, i.e.,\ 
    \begin{align*}
        \frac{\diff}{\diff t}Y(t) \ =\ e_{X(t)}\,,
    \end{align*}
    and $X(t)$ is resampled from the uniform distribution on $\mathbb Z_n$ with rate $\gamma\in [0,\infty )$ and performs nearest-neighbour jumps
     $i \to i\pm1$ with rates $\beta\cdot\left(W'(y_{i\pm 1}-y_i)\right)_-$ where $y$ is the current value of $Y(t)$. The process $\mathcal Y (t)$ with invariant distribution given by the Gibbs measure on $\mathcal S$ is again obtained by projection, i.e.,
     $$\mathcal Y_i(t)\ =\ Y_i(t)-n^{-1}t\, .$$
    It can be verified similarly as in the proof of \Cref{lem:LhatProperties} that $(\mathcal Y(t),X(t))$ is a piecewise deterministic Markov process with state space
    $\mathcal S\times\mathbb Z_n$ and invariant measure $\mu_\beta\otimes \mathrm{Unif}(\mathbb Z_n)$.
   Defining the directional derivatives $\partial_{y_i}$ as in \eqref{projectedvectorfields}, the generator and its adjoint are given on functions $f\in C^1_b(\S\times\Z_n )$ by
\begin{eqnarray*}
     \hatL f (y,i)& =&   \partial_{y_i}f(y,i)\,+\, 
    \beta\,\left(W'(y_{i+ 1}-y_i)\right)_-\bigl(f(y,i+ 1)-f(y, i)\bigr)\\&&
    \,+\, 
    \beta\,\left(W'(y_i-y_{i- 1})\right)_+\bigl(f(y,i- 1)-f(y, i)\bigr)
     \,+\, 
    \frac \gamma n\sum_{j=1}^n\bigl(f(y,j)-f(y, i)\bigr)
    \, ,\\
    \hatL^* f (y,i)& =&   -\partial_{y_i}f(y,i)\,+\, 
    \beta\,\left(W'(y_{i+ 1}-y_i)\right)_+\bigl(f(y,i+ 1)-f(y, i)\bigr)\\&&
    \,+\, 
    \beta\,\left(W'(y_i-y_{i- 1})\right)_-\bigl(f(y,i- 1)-f(y, i)\bigr)
    \,+\, 
    \frac \gamma n\sum_{j=1}^n\bigl(f(y,j)-f(y, i)\bigr)
    \,,
\end{eqnarray*}

 In contrast, randomised Hamiltonian Monte Carlo methods \cite{Bou-Rabee2017RHMC,Neal2011HMC} with exponentially distributed integration times are related to piecewise deterministic Markov processes $(Q(t),V(t))$ where the deterministic motion is given by Hamiltonian dynamics, and the velocity component $V(t)$ is resampled from a normal distribution after
 independent waiting times that are exponentially distributed with a fixed parameter $\gamma\in (0,\infty )$. For the chain of oscillators, we consider randomised HMC based on the 
 Hamiltonian $H(q,v)=U(q)+\frac 12 |v|^2$.
The corresponding Hamiltonian dynamics 
    \begin{equation}\label{eq:HamDyn}
        \frac{\diff}{\diff t}q_i(t)= v_i(t),\quad\frac{\diff}{\diff t} v_i(t)=  \,W'\big(q_{i+1}(t)-q_i(t)\big)-W'\big(q_i(t)-q_{i-1}(t)\big)  \,,\quad i\in\Z_n,
    \end{equation}
    preserves the Boltzmann-Gibbs distribution 
    $$\hat\mu_\beta (\diff q\diff v)\ =\ \mu_\beta (\diff q)\,\mathcal N(0,\beta^{-1}\mathrm{I}_{\mathcal S})(\diff v)\ \propto\ \exp (-\beta H(q,v))\diff q\diff v\quad\text{on }\mathcal S\times\mathcal S.$$
    Therefore, the continuous time Markov process with generator
    \begin{eqnarray*}
      \mathfrak H f (q,v)& =& \sum_{i=1}^n \left( v_i\partial_{q_i}f(q,v)+\left(W'(q_{i+1}-q_i)-W'(q_i-q_{i-1})\right)\partial_{v_i}f(q,v)\right)\\
      &&\qquad\qquad +\gamma \int (f(q,w)-f(q,v))\,\mathcal N(0,\beta^{-1}\mathrm{I}_{\mathcal S})(\diff w) 
    \end{eqnarray*}
    has invariant measure $\hat\mu_\beta$.
    Different versions of randomised Hamiltonian Monte Carlo algorithms are obtained by replacing the
    Hamiltonian dynamics in \eqref{eq:HamDyn} by a time discretisation, usually based on a 
    symplectic integrator, and, possibly, including a Metropolis filter to adjust for the
    discretisation bias. 

    Note that since in Event Chain Monte Carlo, only one particle is moving at a time, the complexity for simulating the corresponding Markov process over a time unit can be assumed
    to be of order $\Theta(1)$. In contrast, for Hamiltonian Monte Carlo, the complexity  for simulating the process over one time unit is at least of order $n$. More precisely, it is known that for the standard discretisation based on the Verlet scheme, the discretisation step size has to be chosen of order $O(n^{-1/4})$ to adjust for the discretisation bias, resulting in an
    overall complexity of order $\Omega (n^{5/4})$ per time unit \cite{ApersGriblingSzilagyi2024HMC,Beskos2013OptimalTuningHMC}.

  \subsection{The harmonic chain: Event Chain vs.\ Hamiltonian Monte Carlo}

    In order to compare the relaxation times for the continuous time Markov processes corresponding to ECMC and HMC, we consider the case of a harmonic chain of oscillators where $W(y)=y^2$ and $\beta =1$. In this case, 
    the ECMC algorithm exactly recovers the local time process of the self-repellent random walk considered above. On the other hand, randomised HMC is a second-order lift of the discrete stochastic heat equation
    \begin{equation}\label{eq:SHE2}
        \diff Z(t)\ =\ \frac{1}{2}\Delta_nZ(t)\diff t\;+\; \diff B(t)
    \end{equation}
    with a Brownian motion $B$ on $\S$, see \cite[Example 3]{EberleLoerler2024Lifts}.
    Recall that the local time process of the self-repellent walk is a second-order lift 
    of \eqref{eq:SHE} which describes the same Markov process as \eqref{eq:SHE2} but slowed down by a factor $n$. Due to this factor, the spectral gap $m$ of the generator of \eqref{eq:SHE2} is of the order $\Theta(n^2)$, see \eqref{eq:gapcollapse}. Based on the second order lift property, it has been shown in
    \cite{Lu2022PDMP,EberleLoerler2024Lifts,EberleLoerler2024Spacetime} that the $L^2$ rate
    of convergence to stationarity of the Markov process with generator $\mathfrak H$ is of the order $\Theta\left(\min (\gamma , m\gamma^{-1} )\right)$. This result holds generally for convex potentials, and in the quadratic case it also follows by an explicit computation. In particular, for any choice of $\gamma$, the $L^2$ relaxation time is 
    at least of order $\Omega (m^{-1/2})=\Omega (n)$, and the optimal order is achieved for
    $\gamma \propto 1/n$.

     \begin{corollary}
        For the harmonic chain of $n$ particles, the computational complexity of the Event Chain Monte Carlo algorithm with particle refreshment at rate $\gamma\propto n^{-1}$ is of the order $O(n^2)$. On the same model, the computational complexity of randomised HMC is of the order $\Omega(n^{9/4})$ for any choice of $\gamma$.
    \end{corollary}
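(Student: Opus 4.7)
The plan is to combine, for each of the two algorithms, the $L^2$-relaxation time bound with the per-continuous-time-unit simulation cost that has been justified in the preceding discussion.

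For the ECMC claim, I would use the identification --- stated in the opening paragraph of this subsection --- of the ECMC dynamics for the harmonic chain ($W(y)=y^2$, $\beta=1$) with the projected local time process of the self-repellent random walk. Including uniform refreshment at rate $\gamma$ puts us exactly in the setting of \Cref{thm:convrate}, which for $\gamma\propto n^{-1}$ delivers an $L^2$-relaxation time of order $O(n^2)$. Multiplying by the $\Theta(1)$ per-unit-time simulation cost of ECMC (only one coordinate moves between events, and refreshments are also $O(1)$) gives the overall complexity $O(n^2)$.

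For the randomised HMC claim, I would combine two ingredients. First, the representation of $(Q(t),V(t))$ as a second-order lift of the discrete stochastic heat equation \eqref{eq:SHE2} allows me to apply the universal lower bound of \Cref{thm:lowerbound}: the collapse has $L^2$ spectral gap $m\in\Theta(n^{-2})$, so $t_{\rel}\ge\Omega(m^{-1/2})=\Omega(n)$ for every refreshment rate $\gamma>0$. Second, controlling the Verlet discretisation bias on the harmonic chain forces the step size to be $O(n^{-1/4})$, and every step updates all $n$ coordinates, giving per-unit-time simulation cost $\Omega(n^{5/4})$ as established in \cite{ApersGriblingSzilagyi2024HMC,Beskos2013OptimalTuningHMC}. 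Multiplying the two bounds yields $\Omega(n\cdot n^{5/4})=\Omega(n^{9/4})$.

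The main obstacle I anticipate is not in any single bound but in ensuring that the $\Omega(n^{5/4})$ per-unit-time cost for HMC is genuinely uniform in the refreshment rate $\gamma$: a priori one might worry that a very small or very large $\gamma$ would allow a coarser discretisation. I would resolve this by appealing to the stiffness of the harmonic-chain Hessian --- whose largest eigenvalue is $\Theta(n^2)$ --- which forces both the stability and the accuracy thresholds of any symplectic integrator, regardless of how the deterministic Hamiltonian steps are interleaved with velocity resamplings. Once that point is confirmed, the corollary reduces to the two displayed multiplications.
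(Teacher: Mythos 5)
Your proposal is correct and follows essentially the same route as the paper: combine the $O(n^2)$ relaxation time from \Cref{thm:convrate} with the $\Theta(1)$ per-time-unit cost for ECMC, and the $\Omega(n)$ relaxation-time lower bound for randomised HMC (valid for every $\gamma$ via the second-order lift property, whether through \Cref{thm:lowerbound} as you do or through the cited $\Theta(\min(\gamma,m\gamma^{-1}))$ rate) with the $\Omega(n^{5/4})$ per-time-unit cost from \cite{ApersGriblingSzilagyi2024HMC,Beskos2013OptimalTuningHMC}. One caveat on your closing heuristic: for the harmonic chain the Hessian $2(-\Delta_n)$ has largest eigenvalue $\Theta(1)$, not $\Theta(n^2)$, so the $O(n^{-1/4})$ step-size requirement is not a stiffness effect but comes from the dimension-dependent accumulation of discretisation error established in the cited references, which is indeed independent of the refreshment rate $\gamma$ --- exactly as the paper simply asserts.
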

    \begin{proof}
        The complexity of the ECMC algorithm coincides with its relaxation time since the computational complexity per time unit if of order $\Theta(1)$, so that the complexity bound follows from the relaxation time bound in \Cref{thm:convrate}. Randomised HMC, on the other hand, has an overall complexity of order $\Omega(n^{5/4})$ per time unit, see the discussion above. The relaxation time of order $\Omega(n)$ thus yields a complexity of order $\Omega(n^{9/4})$.
    \end{proof}

For the harmonic chain, ECMC thus exhibits an efficiency gain compared to more standard MCMC methods such as HMC, and a substantial gain in comparison to traditional MCMC methods based on reversible Markov processes.
While similar gains have been observed empirically for chains of oscillators with Lennard-Jones interactions \cite{LeiKrauthMaggs2019ECMC} and hard sphere models \cite{LeiKrauth2018Mixing}, so that this behaviour is conjectured to hold more generally \cite{Krauth2024HMCvsECMC}, our result gives the first rigorous proof that ECMC methods can outperform HMC on specific models.



\section{Convergence to equilibrium}

Convergence to equilibrium of non-reversible Markov processes can be quantified using a \emph{flow Poincaré inequality}, which can be established for second-order lifts using the framework of \cite{EGHLM2025Convergence,EberleLoerler2024Spacetime}. For the modified dynamics with generator
\begin{equation*}
    \hatLgamma f \ =\ \hatL f\ +\ \gamma\,(Q f-f)\, ,\qquad\dom(\hatLgamma) = \dom(\hatL)
\end{equation*}
introduced in \eqref{eq:Lhatsplitting} above, the general framework simplifies and reduces to \Cref{thm:abstractconvergence}. Denote by $(\E,\dom(\E))$ the Dirichlet form associated to $(\L,\dom(\L))$, which is the extension of 
\begin{equation*}
    \E(f,g)=\langle f,-\L g\rangle_{L^2(\mu)}
\end{equation*}
to a closed symmetric bilinear form with domain $\dom(\E)$ given by the closure of $\dom(\L)$ with respect to the norm $\|f\|_{L^2(\mu)} + \E(f,f)^{1/2}$. Recall that the probability measure $\hat\mu = \mu\otimes\unif(\Z_n)$ is invariant for the Markov process generated by $\hatLgamma$.

\begin{theorem}\label{thm:abstractconvergence}
    Assume that
    \begin{enumerate}[(i)]
        \item the operator $(\hatL,\dom(\hatL))$ is a second-order lift of $(\L,\dom(\L))$ such that $g\circ\pi\in\dom(\hatL^*)$ and $\hatL^*(g\circ\pi) = -\hatL(g\circ\pi)$ for all $g\in\dom(\L)$;
        \item the operator $(\L,\dom(\L))$ has purely discrete spectrum on $L^2(\mu)$ and a spectral gap $m>0$, i.e.\ 
        \begin{equation*}
            \norm{g-\mu(g)}_{L^2(\mu)}^2\ \leq \ \frac{1}{m}\E(g)
        \end{equation*}
        for all $f\in\dom(\E)$;
        \item there exists a constant $C_1>0$ such that
        \begin{equation*}
            \big\langle \hatL(g\circ\pi),\hatL(f-\Pi f)\big\rangle_{L^2(\hat\mu)}\ \leq\ C_1\norm{\L g}_{L^2(\mu)}\norm{f-Qf}_{L^2(\hat\mu)}
        \end{equation*}
        for all $f\in\dom(\hatL)$ and $g\in\dom(\L)$;
        \item the transition matrix $Q$ has a spectral gap $m_Q$, i.e.\ 
        \begin{equation*}
            x^\top Qx\leq (1-m_Q)|x|^2
        \end{equation*}
        for all $x\in\R^n$ with $\sum_{i=1}^nx_i=0$.
    \end{enumerate}
    Then there exists a universal constant $C > 0$ such that, for any $T>0$ and $\gamma>0$, 
    the semigroup $(\hat P_t^{(\gamma)})$ generated by $(\hatLgamma,\dom(\hatLgamma))$ satisfies
    \begin{equation*}
        \lVert\hat P_t^{(\gamma)}f\lVert_{L^2(\hat\mu)}\ \leq\ e^{-\nu(t-T)}\lVert f\rVert_{L^2(\hat\mu)}\qquad\text{for all }t\geq 0\text{ and }f\in L_0^2(\hat\mu),
    \end{equation*}
    where the rate $\nu$ is given by 
    \begin{equation}\label{eq:nu}
         \nu = C\,\frac{\gamma}{\gamma^2/m+C_1^2 + \frac{1}{m_Q}(1 + \frac1{mT^2})}.
    \end{equation}
\end{theorem}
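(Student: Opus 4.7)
The plan is to derive a flow Poincaré inequality for $\hatLgamma$ over a window $[0,T]$ and upgrade it to exponential $L^2$-decay by a standard Gronwall argument, following the general strategy for second-order lifts in \cite{EGHLM2025Convergence,EberleLoerler2024Spacetime}. The specialisation here is that the extra randomisation is a kernel $Q$ acting only on the second component, so the abstract constants of that framework collapse to the four quantities $m$, $m_Q$, $C_1$ and $T$ listed in the hypotheses.

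First I would note that the Dirichlet form of $\hatLgamma$ is
\begin{equation*}
    \hat\E^{(\gamma)}(f,f) := -\langle f,\hatLgamma f\rangle_{L^2(\hat\mu)} = -\langle f,\hatL f\rangle_{L^2(\hat\mu)} + \gamma\,\langle f,(I-Q)f\rangle_{L^2(\hat\mu)},
\end{equation*}
so that $\tfrac{d}{dt}\|\hat P_t^{(\gamma)}f\|_{L^2(\hat\mu)}^2 = -2\hat\E^{(\gamma)}(\hat P_t^{(\gamma)}f,\hat P_t^{(\gamma)}f)$. It therefore suffices to establish a flow Poincaré inequality
\begin{equation*}
    \|f\|_{L^2(\hat\mu)}^2 \;\leq\; K_T\cdot\frac{1}{T}\int_0^T \hat\E^{(\gamma)}(\hat P_s^{(\gamma)}f,\hat P_s^{(\gamma)}f)\,ds, \qquad f\in L_0^2(\hat\mu),
\end{equation*}
with $K_T$ of the order advertised in \eqref{eq:nu}; Gronwall then converts this into the exponential decay after the burn-in time $T$.

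To prove the Poincaré inequality I would decompose $f = (f-\Pi f) + (\Pi f - \hat\mu(f))$. The velocity fluctuations are controlled by assumption (iv): $\|f-\Pi f\|_{L^2(\hat\mu)}^2 \leq m_Q^{-1}\langle f,(I-Q)f\rangle_{L^2(\hat\mu)} \leq (\gamma m_Q)^{-1}\hat\E^{(\gamma)}(f,f)$. The base-space part $g := \Pi f - \hat\mu(f)$ is a pull-back $g\circ\pi$ of a mean-zero function on $\S$; using assumption (ii) I would solve $-\L h = g$ with $\|h\|_{L^2(\mu)}\leq m^{-1}\|g\|_{L^2(\mu)}$, and the lift relation \eqref{eq:deflift2} together with (i) yields
\begin{equation*}
    \|g\|_{L^2(\mu)}^2 = \langle g,-\L h\rangle_{L^2(\mu)} = \tfrac{1}{2}\langle \hatL(g\circ\pi),\hatL(h\circ\pi)\rangle_{L^2(\hat\mu)}.
\end{equation*}

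The final step is a time-averaging trick applied to $\hatL(g\circ\pi) = \hatL f - \hatL(f-\Pi f)$. The cross-term $\langle \hatL(f-\Pi f),\hatL(h\circ\pi)\rangle$ is bounded directly by assumption (iii), contributing the $C_1^2$ factor. The main term $\tfrac{1}{2}\langle \hatL f,\hatL(h\circ\pi)\rangle$ is handled by replacing $f$ with its semigroup average $\tfrac{1}{T}\int_0^T\hat P_s^{(\gamma)}f\,ds$, trading one factor of $\hatL$ for an integral of $\hatLgamma \hat P_s^{(\gamma)}f$ via the fundamental theorem of calculus, and splitting according to $\hatLgamma = \hatL + \gamma(Q-I)$. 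The $\hatL$-contribution is again controlled by (iii); the $\gamma(Q-I)$-contribution, paired with $\|\hatL(h\circ\pi)\|^2 \leq 2m^{-1}\|g\|^2$ and assumption (iv), produces the $\gamma^2/m$ term; and the boundary contributions from integration by parts in $s$ combined with Poincaré for $\L$ produce the $m_Q^{-1}(1+(mT^2)^{-1})$ term. The main obstacle will be the careful bookkeeping in this last step: tracking all three contributions simultaneously and absorbing cross-terms via Young's inequality so as to obtain $K_T = O\big(\gamma^{-1}(\gamma^2/m + C_1^2 + m_Q^{-1}(1+(mT^2)^{-1}))\big)$, whence $\nu = 1/K_T$ yields \eqref{eq:nu}.
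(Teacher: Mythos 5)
Your plan is, in substance, the same route the paper takes: the paper's proof of \Cref{thm:abstractconvergence} is a one-line reduction to \cite[Theorem 12]{EGHLM2025Convergence}, and what you sketch is precisely the flow/space-time Poincaré argument by which that cited theorem is proved, specialised to a refreshment kernel $Q$ acting only on the second component. Your accounting of where each hypothesis enters is correct and consistent with \eqref{eq:nu}: the velocity Poincaré (iv) gives the $m_Q^{-1}$ term, assumption (iii) applied with $h=(-\L)^{-1}\Pi f$ gives the $C_1^2$ term (using $\norm{\L h}=\norm{\Pi f}$ and $\norm{f-Qf}^2\leq 2\gamma^{-1}\hat\E^{(\gamma)}(f,f)$), the refreshment part of the generator paired with $\norm{\hatL(h\circ\pi)}^2=2\E(h)\leq 2m^{-1}\norm{\Pi f}^2$ gives the $\gamma^2/m$ term, and time-boundary terms give the $(mT^2)^{-1}$ contribution.

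The gap is that the decisive step, the flow Poincaré inequality itself, is asserted rather than proved, and as literally described it does not close. First, replacing $f$ by its semigroup average $\bar f=\frac1T\int_0^T\hat P_s^{(\gamma)}f\,\diff s$ inside $\langle\hatL f,\hatL(h\circ\pi)\rangle$ changes the quantity being bounded: $h$ was built from $g=\Pi f$, whereas the identity you exploit produces $\langle\Pi\bar f,\cdot\rangle$; one must either pose the inequality for time-averaged (or time-weighted) norms throughout, or control the discrepancy between $\Pi f_s$ and $\Pi\bar f$ by a further use of the equation, and this is not addressed. Second, after the fundamental theorem of calculus the boundary contribution is $\frac1T\langle f_T-f_0,\hatL(h\circ\pi)\rangle$; since $\Pi[\hatL(h\circ\pi)]=0$ this only sees the velocity fluctuations $f_s-\Pi f_s$ at the two endpoint times $s=0,T$, and these cannot be bounded by the time-integrated Dirichlet form appearing on the right-hand side, because $\hat\E^{(\gamma)}(\hat P_s^{(\gamma)}f)$ is not monotone in $s$. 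Repairing this requires an additional averaging or a smooth time weight vanishing (or flattening) at the endpoints — exactly the device used in the proof of \cite[Theorem 12]{EGHLM2025Convergence}, and the place where the $m_Q^{-1}(mT^2)^{-1}$ term genuinely originates. (Also, the passage from the flow Poincaré inequality to $e^{-\nu(t-T)}$ is an iteration over windows of length $T$ rather than a Gronwall argument, a minor point.) So: right strategy and correct bookkeeping of the constants, but the core estimate still has to be carried out with the weighted space-time construction, or else one should simply cite the abstract theorem as the paper does.
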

\begin{proof}
    The statement is a direct consequence of \cite[Theorem 12]{EGHLM2025Convergence}, simplified by the fact that we consider a refreshment of the active particle determined by the transition matrix~$Q$.
\end{proof}

Choosing $T=m^{-1/2}$ in \eqref{eq:nu} yields the inverse convergence rate
\begin{equation*}
    \nu^{-1}\in O\left(\gamma m^{-1} + \gamma^{-1}(m_Q^{-1}+C_1^2)\right)\,,
\end{equation*}
and optimising by choosing $\gamma \propto \sqrt{m(m_Q^{-1}+C_1^2})$ yields
\begin{equation*}
    \nu^{-1} \in O\Bigl(m^{-1/2}\bigl(m_Q^{-1/2}+C_1\bigr)\Bigr)\,.
\end{equation*}
If $Q=\Pi$, we can choose $m_Q=1$ and obtain $\nu^{-1}\in O(m^{-1/2}C_1)$ for the choice \mbox{$\gamma\propto m^{1/2}C_1$}.

\subsection{Upper bounds on relaxation times of self-repellent random walks}\label{ssec:upperboundproof}

We turn to the proof of the upper bounds on the $L^2$-relaxation time of the modified self-repellent random walks given in \Cref{thm:convrate} and \Cref{thm:convrate_rwrefresh}.

\begin{lemma}\label{lem:bochnerbound}
    We have
    \begin{equation*}
        \frac{1}{n^2}\int\norm{\nabla^2 g}_F^2\diff\mu + \frac{1}{n^2}\sum_{i=1}^n\int ((\partial_{\ell_i}-\partial_{\ell_{i-1}})g)^2 \diff\mu = \int (2\L g)^2 \diff\mu
    \end{equation*}
    where $\norm{\nabla^2g}_F^2$ denotes the squared Frobenius norm of $\nabla^2g$.
\end{lemma}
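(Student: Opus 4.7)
The identity is an integrated Bochner-type formula for the reversible generator $\L$. My plan is to derive it directly by integration by parts against $\mu$, without invoking abstract Bakry--Émery machinery. To absorb the factor $1/(2n)$ in $\L$, I would work with the rescaled generator $\tilde\L := 2n\L = \sum_i D_i^2 + \sum_i(\Delta_n\ell)_i D_i$, writing $D_i:=\partial_{\ell_i}$; since $(\tilde\L g)^2 = n^2 (2\L g)^2$, the claim is equivalent to
\begin{equation*}
    \int(\tilde\L g)^2 \diff\mu \;=\; \int\|\nabla^2 g\|_F^2\diff\mu \;+\; \sum_{i=1}^n\int\bigl((D_i - D_{i-1})g\bigr)^2\diff\mu.
\end{equation*}

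Two elementary identities drive the argument. First, repeating the integration by parts from \Cref{lem:LhatProperties}(ii) in $L^2(\mu)$ rather than $L^2(\hat\mu)$ gives the adjoint $D_i^* = -D_i - (\Delta_n\ell)_i$, so that $\tilde\L = -\sum_i D_i^* D_i$. Second, differentiating the linear function $\ell\mapsto(\Delta_n\ell)_j$ in the direction $e_i - n^{-1}\mathbf{1}$ yields $(\Delta_n)_{ji} - n^{-1}\sum_k(\Delta_n)_{jk} = (\Delta_n)_{ij}$, the correction term vanishing because every row of $\Delta_n$ sums to zero. Combined with $[D_i,D_j]=0$, this produces the scalar commutator $[D_i, D_j^*] = -(\Delta_n)_{ij}\,I$.

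With these in hand, the main computation is to expand
\begin{equation*}
    \int(\tilde\L g)^2 \diff\mu \;=\; \sum_{i,j}\int D_i^* D_i g \cdot D_j^* D_j g\diff\mu,
\end{equation*}
move one factor of $D_j^*$ back onto the left via its adjoint $D_j$, and then commute $D_j$ past $D_i^*$. The commutator-free piece collapses to $\sum_{i,j}\int(D_iD_jg)^2\diff\mu = \int\|\nabla^2 g\|_F^2\diff\mu$, while the commutator piece contributes $-\sum_{i,j}(\Delta_n)_{ij}\int(D_ig)(D_jg)\diff\mu$, which the cyclic identity $-x^\top\Delta_n x = \sum_i(x_i - x_{i-1})^2$ rewrites as the claimed finite-difference sum.

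I do not anticipate a genuine obstacle; the only care needed is the intrinsic interpretation on the $(n-1)$-dimensional subspace $\S$. The projected-derivative convention \eqref{projectedvectorfields} together with $\Delta_n\mathbf{1}=0$ makes this painless: all adjoint and commutator formulas are invariant under the choice of representative of $\ell$ in $\R^n$, and the expression $\sum_{i,j}(D_iD_jg)^2$ coincides with the intrinsic Frobenius norm $\operatorname{tr}((\nabla^2 g)^2)$ because $\sum_i(e_i - n^{-1}\mathbf{1})(e_i - n^{-1}\mathbf{1})^\top$ equals the orthogonal projector onto $\S$.
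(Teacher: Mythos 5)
Your argument is correct and rests on the same decomposition as the paper's proof: expanding $\int(2n\L g)^2\diff\mu$ into the squared Frobenius norm of the Hessian plus the curvature term $\int\nabla g^\top(-\Delta_n)\nabla g\diff\mu$, which the identity $-x^\top\Delta_n x=\sum_i(x_i-x_{i-1})^2$ converts into the finite-difference sum. The only difference is that the paper cites Bochner's identity for $\mu\propto e^{-U}$ with $\nabla^2U=-\Delta_n$ as a black box, whereas you rederive it via the adjoint $D_i^*=-D_i-(\Delta_n\ell)_i$ and the commutator $[D_i,D_j^*]=-(\Delta_n)_{ij}$, giving a self-contained (and sign-consistent) version of the same step, including the correct intrinsic interpretation of $\norm{\nabla^2g}_F^2$ on $\S$.
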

\begin{proof}
    Since $\mu(\diff\ell)\propto\exp(\frac{1}{2}\ell^\top\Delta_n\ell)\diff\ell$, where $\diff\ell$ denotes the Haar measure on $\S$, Bochner's identity yields
    \begin{equation*}
        \int (2\L g)^2\diff\mu = \frac{1}{n^2}\int(\nabla^*\nabla g)^2\diff\mu=  \frac{1}{n^2}\int \norm{\nabla^2g}_F^2\diff\mu + \frac{1}{n^2}\int\nabla g^\top\Delta_n\nabla g\diff\mu\,.
    \end{equation*}
    The claim then follows from the relation
    \begin{equation*}
        \sum_{i=1}^n ((\partial_{\ell_i}-\partial_{\ell_{i-1}})g)^2  =  \nabla g^\top\Delta_n\nabla g.\qedhere
    \end{equation*}
\end{proof}

\begin{lemma}\label{lem:ibpboundmu}
    For any $v,w\in\S$ and $g\in C_b^2(\S)$, we have
    \begin{equation*}
        \int (v^\top\ell)^2(\partial_wg)^2\,\mu(\diff\ell) \leq 2v^\top(-\Delta_n)^{-1}v\int(\partial_wg)^2\diff\mu + 4\int(\partial_{\Delta_n^{-1}v,w}^2g)^2\diff\mu 
    \end{equation*}
    where $\Delta_n$ is the discrete Laplacian given by \eqref{eq:Deltan}.
\end{lemma}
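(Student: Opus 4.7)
The plan is to reduce the bound to a Gaussian integration-by-parts identity on $\S$ and then close via Cauchy-Schwarz and Young's inequality. Since $\mu$ has density proportional to $\exp(\tfrac{1}{2}\ell^\top\Delta_n\ell)$ with respect to the Haar measure on $\S$, and since $\Delta_n\colon\S\to\S$ is a bijection, I would set $u=-\Delta_n^{-1}v\in\S$, which gives $v=-\Delta_n u$ and hence $v^\top\ell=-u^\top\Delta_n\ell$. Standard Gaussian integration by parts in the tangential direction $u$ then delivers the key identity
\begin{equation*}
    \int f(\ell)\,v^\top\ell\,\mu(\diff\ell)\ =\ \int \partial_u f(\ell)\,\mu(\diff\ell)
\end{equation*}
for any $f$ smooth with at most polynomial growth; this is the only step in the proof that uses the specific Gaussian structure of $\mu$.

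Next I would apply this identity with the test function $f(\ell)=(v^\top\ell)\,(\partial_w g(\ell))^2$. Computing
$\partial_u f=(v^\top u)(\partial_w g)^2+2(v^\top\ell)(\partial_w g)\,\partial_{u,w}^2 g$
and noting that $v^\top u=v^\top(-\Delta_n)^{-1}v$, I obtain the exact identity
\begin{equation*}
    \int (v^\top\ell)^2(\partial_w g)^2\,\mu(\diff\ell)\ =\ v^\top(-\Delta_n)^{-1}v\int(\partial_w g)^2\diff\mu\;+\;2\int (v^\top\ell)(\partial_w g)\,\partial_{u,w}^2 g\,\mu(\diff\ell).
\end{equation*}
Writing $A$ for the left-hand side and $B=\int(\partial_{u,w}^2 g)^2\diff\mu=\int(\partial_{\Delta_n^{-1}v,w}^2 g)^2\diff\mu$ (the sign of $u$ drops out upon squaring), Cauchy-Schwarz bounds the cross term by $2\sqrt{A}\sqrt{B}$, and the Young inequality $2\sqrt{AB}\leq\tfrac{1}{2}A+2B$ lets me absorb $\tfrac{1}{2}A$ onto the left-hand side. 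Rearranging yields exactly $A\leq 2v^\top(-\Delta_n)^{-1}v\int(\partial_w g)^2\diff\mu+4B$, which is the claimed inequality.

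I do not expect any serious obstacle: the argument is short and mechanical once the IBP identity is set up. The only mild technical points are (a) checking that $\partial_u$ is a genuine tangential vector field on $\S$, which holds because $u\in\S$, and (b) justifying the vanishing boundary term in the integration by parts for $f$ of the stated form, which is immediate from the Gaussian decay of $\mu$ together with $g\in C_b^2(\S)$ (so $(\partial_w g)^2$ is bounded and $v^\top\ell$ grows only linearly).
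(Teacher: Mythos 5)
Your proof is correct and follows essentially the same route as the paper: Gaussian integration by parts on $\S$ applied to a test function containing the linear factor times $(\partial_w g)^2$, with the cross term absorbed into the left-hand side (you substitute $u=-\Delta_n^{-1}v$ at the outset and use integral Cauchy--Schwarz plus Young, while the paper works with $v^\top\Delta_n\ell$, uses pointwise Young, and replaces $v$ by $\Delta_n^{-1}v$ at the end — the same computation with identical constants).
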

\begin{proof}
    An integration by parts with respect to the Gaussian measure $\mu$ gives
    \begin{equation*}
        -\int v^\top\nabla f\diff\mu = \int v^\top\Delta_n\ell\,f(\ell)\,\mu(\diff\ell)
    \end{equation*}
    for all $f\in C_b^1(\S)$. Applying this with $f(\ell) = v^\top\Delta_n\ell (\partial_wg)^2$ and the inequality $2ab\leq\frac{1}{2}a^2+2b^2$ yields
    \begin{align*}
        \MoveEqLeft\int (v^\top\Delta_n\ell)^2(\partial_wg)^2\,\mu(\diff\ell) = -\int v^\top\nabla(v^\top\Delta_n\ell (\partial_wg)^2)\,\mu(\diff\ell)\\
        &=v^\top(-\Delta_n)v\int(\partial_w^2g)\diff\mu - 2\int v^\top\Delta_n\ell\partial_{vw}^2g(\ell)\partial_wg(\ell)\,\mu(\diff\ell) \\
        &\leq v^\top(-\Delta_n)v\int(\partial_w^2g)\diff\mu + \frac{1}{2}\int (v^\top\Delta_n\ell)^2(\partial_wg)^2\,\mu(\diff\ell) + 2\int (\partial_{v,w}^2g)^2\diff\mu\,.
    \end{align*}
    Rearranging and replacing $v$ by $\Delta_n^{-1}v$ yields the claim.
\end{proof}

\begin{proof}[Proof of \Cref{thm:convrate}]
    The claim follows from \Cref{thm:abstractconvergence} if we can show that conditions (i)--(iv) are satisfied with $m \in\Theta(n^{-3})$, $C_1^2\in O(n)$, and $m_Q=1$.
    
    To this end, condition (i) is satisfied by \Cref{lem:LhatProperties} and \Cref{thm:SRMlift}. The operator $(\L,\dom(\L))$ has discrete spectrum on $L^2(\mu)$ and a spectral gap $m\in\Theta(n^{-3})$ by \eqref{eq:gapcollapse}, so that condition (ii) is satisfied. Condition (iv) is immediately satisfied with $m_Q=1$ in case $Q=\Pi$. It remains to verify condition (iii). By the Cauchy-Schwarz inequality, it suffices to bound
    \begin{equation*}
        \norm{\hatL^*\hatL(g\circ\pi)}_{L^2(\hat\mu)}\leq C_1\norm{\L g}_{L^2(\mu)}.
    \end{equation*}
    By \Cref{lem:LhatProperties},
    \begin{align*}
        \hatL^*\hatL (g\circ\pi)(\ell,x) = -\partial_{\ell_x}^2g(\ell) &- (\ell_x-\ell_{x-1})_-\left(\partial_{\ell_x}g(\ell)-\partial_{\ell_{x-1}} g(\ell)\right) \\
        &+ (\ell_{x+1}-\ell_{x})_+\left(\partial_{\ell_{x+1}}g(\ell)-\partial_{\ell_{x}} g(\ell)\right).
    \end{align*}
    Hence
    \begin{align*}
        \MoveEqLeft\norm{\hatL^*\hatL(g\circ\pi)}_{L^2(\hat\mu)}^2 = \frac{1}{n}\sum_{i=1}^n\int\big(\big(\hatL^*\hatL(g\circ\pi)\big)(\ell,i)\big)^2\mu(\diff\ell)\\
        &\leq \frac{3}{n}\sum_{i=1}^n\int(\partial_{\ell_i}^2g(\ell))^2\mu(\diff \ell) + \frac{3}{n}\sum_{i=1}^n\int(\ell_i-\ell_{i-1})^2\left(\partial_{\ell_i}g(\ell)-\partial_{\ell_{i-1}} g(\ell)\right)^2\mu(\diff\ell).
    \end{align*}
    For the second summand, we apply \Cref{lem:ibpboundmu} with $v_i=w_i=e_i-e_{i-1}$ to obtain
    \begin{align*}
        \MoveEqLeft\frac{3}{n}\sum_{i=1}^n\int(\ell_i-\ell_{i-1})^2\left(\partial_{\ell_i}g(\ell)-\partial_{\ell_{i-1}} g(\ell)\right)^2\mu(\diff\ell)\\
        &\leq \frac{6}{n}\sum_{i=1}^nv_i^\top(-\Delta_n)^{-1}v_i\int((\partial_{\ell_i}-\partial_{\ell_{i-1}})g)^2\diff\mu + \frac{12}{n}\sum_{i=1}^n\int\big(\partial_{\Delta_n^{-1}v_i,v_i}^2g\big)^2\diff\mu\,.
    \end{align*}
    Note that $u_i = \Delta_n^{-1}v_i$ is given by
    \begin{equation*}
        (u_i)_j = \left(1-\frac{1}{n}\right)\cdot\begin{cases}
            \frac{1}{2}&\text{if }j=i-1,\\
            -\frac{1}{2}&\text{if }j = i,\\
            -\frac{1}{2}+\frac{k}{n-1}&\text{if }j = i+k,\,k\geq0\,.
        \end{cases}
    \end{equation*}
    In particular, $u_{i+1}-u_i = e_i-\frac{1}{n}\mathbf{1}=\tilde e_i$ is the projection of the $i$-th basis vector in $\R^n$ onto $\S$,
    \begin{equation*}
        v_i^\top (-\Delta_n)^{-1}v_i = -v_i^\top u_i = (u_i)_i-(u_i)_{i-1} = 1-\frac{1}{n}\,,
    \end{equation*}
    and, since $u_i=\sum_{j=1}^n(u_i)_j e_j=\sum_{j=1}^n(u_i)_j\tilde e_j$ and $v_i = \tilde e_i-\tilde e_{i-1}$,
    \begin{equation*}
        \partial_{u_i,v_i}^2 g= \sum_{j=1}^n(u_i)_j\partial_{\tilde e_i,\tilde e_j}^2g-\sum_{j=1}^n(u_i)_j\partial_{\tilde e_{i-1},\tilde e_j}^2g = \sum_{j=1}^n(u_i-u_{i+1})_j\partial_{\tilde e_i,\tilde e_j}^2g = -\partial_{\tilde e_i,\tilde e_i}^2g.
    \end{equation*}
    In the final equality we used that $u_i-u_{i+1} = -\tilde e_i$ and $\sum_{j=1}^n(\tilde e_i)_j\tilde e_j = \sum_{j=1}^n(\tilde e_i)_je_j = \tilde e_i$.
    Therefore,
    \begin{align*}
        \norm{\hatL^*\hatL(g\circ\pi)}_{L^2(\hat\mu)}^2&\leq \frac{6}{n} \sum_{i=1}^n\int((\partial_{\ell_i}-\partial_{\ell_{i-1}})g)^2\diff\mu +\frac{15}{n}\sum_{i=1}^n\int(\partial_{\ell_i}^2g)^2\diff\mu\\
        &\leq 60n\int(\L g)^2\diff\mu
    \end{align*}
    by \Cref{lem:bochnerbound}, so that assumption (iii) of \Cref{thm:abstractconvergence} is satisfied with $C_1 = \sqrt{60n}$.
\end{proof}

\begin{proof}[Proof of \Cref{thm:convrate_rwrefresh}]
    As in the case of \Cref{thm:convrate}, the claim follows from \Cref{thm:abstractconvergence} with $m_Q \in O(n^2)$.
\end{proof}

\section*{Acknowledgments}

The authors were funded by the Deutsche Forschungsgemeinschaft (DFG, German Research Foundation) under Germany’s Excellence Strategy EXC 2047 -- 390685813 as well as under CRC 1720 -- 539309657. We would like to thank Werner Krauth, Brune Massoulié and Stefan Oberdörster for helpful discussions.

\printbibliography

\end{document}